\newtheorem{proposition}{Proposition}
\newtheorem{remark}{Remark}
\title{On the computation of the extremal index for time series}
 \date{\today}
\begin{document}
 \begin{abstract}
The extremal index is a quantity introduced in extreme value theory to measure the presence of clusters of exceedances. In the dynamical systems framework, it provides important information about the dynamics of the underlying systems. In this paper we provide a review of the meaning of the extremal index in dynamical systems.  Depending on the observables used, this quantity can inform on local properties of attractors such as periodicity, stability and persistence in phase space, or on global properties such as the Lyapunov exponents. We also introduce a new estimator of the extremal index and shows its relation with those previously introduced in the statistical literature. We reserve a particular focus to the systems perturbed with noise as they are a good paradigm of many natural phenomena. Different kind of noises are investigated in the annealed and quenched situations. Applications to climate data are also presented.
\end{abstract}
 \maketitle
\begin{center}
\authors{Th. Caby \footnote{Aix Marseille Universit\'e, Universit\'e de Toulon, CNRS, CPT, 13009 Marseille, France and Center for Nonlinear and Complex Systems, Dipartimento di Scienza ed Alta Tecnologia,
Universit\`a degli Studi dell'Insubria, Como, Italy. E-mail: {\tt \email{caby.theo@gmail.com} };},
D. Faranda\footnote{Laboratoire des Sciences du Climat et de l'Environnement, UMR 8212 CEA-CNRS-UVSQ,
IPSL and Universit\'e Paris-Saclay, 91191 Gif-sur-Yvette, France and
London Mathematical Laboratory, 8 Margravine Gardens, London, W6 8RH, UK. Email: {\tt \email{davide.faranda@lsce.ipsl.fr}}.},
S.\ Vaienti\footnote{Aix Marseille Universit\'e, Universit\'e de Toulon, CNRS, CPT, 13009 Marseille, France. E-mail: {\tt \email{vaienti@cpt.univ-mrs.fr}}.}},
P. Yiou\footnote{Laboratoire des Sciences du Climat et de l'Environnement, UMR 8212 CEA-CNRS-UVSQ,
IPSL and Universit\'e Paris-Saclay, 91191 Gif-sur-Yvette, France. E-mail: {\tt \email{pascal.yiou@lsce.ipsl.fr}}.}

\end{center}
\tableofcontents
\section {Introduction}
In the last ten years extreme value theory (EVT) has been successfully applied to the study of time series generated by dynamical systems. Illustrations can be found in the book \cite{book} and the articles \cite{luca1,luca2,luca3} for an exhaustive account on the formalism, the methodologies and several applications. In particular the extremal index (EI) --- a quantity defined in the unit interval --- has been used as a powerful statistical indicator to discriminate among different qualitative types of dynamical behaviors in the phase space of a few climate models and in real situations (an extensive overview can be found in \cite{davideei}). In a series of papers \cite{nature, messori, FF, davideei,nature2},  the EI was renamed as {\em local persistence} indicator, suitable to estimate the average cluster size of the trajectories within the neighborhood of a given point. The aim of this note is to give an overview on a few recent rigorous mathematical results which show that the EI is very sensitive to  stochastic perturbations affecting the deterministic evolution of a system states. We will also discuss the influence of noise on the {\em dynamical extremal index}, which was recently introduced to characterize the local divergence of chaotic orbits\cite{D2}.

 The first rigorous computations of the EI for  expanding and uniformly hyperbolic systems (from now on named {\em chaotic systems}), showed a dichotomy for the value of the EI, which is equal to $1$ everywhere, except on periodic points.  \cite{JF} give a detailed account of this matter. On the other hand, the EI exhibits local variations which have been also connected to the local fractal dimensions. By assuming that the  data we  are interested in are chaotic, one could wonder about the   wider  variation of the extremal index, offering a spectrum of values beyond the previous dichotomy. In a recent paper \cite{CFMVY} we commented about a similar effect concerning the computation of the local  dimensions. Indeed, while it is well known for a large
class of systems that the local dimensions are all equal to the so-called  information dimension $D_1$ with probability one, large deviation theory estimates
the likelihood of deviations from this value at finite resolution. In this perspective, the spread of the
experimentally observed values of the local dimensions  can be thought of as originating from the
multifractal structure of the invariant measure, which in turn is revealed
by the non-constant value of generalized dimensions.

Although finite resolution affects the  estimation of the extremal index, it is unlikely that it  enjoys some large deviation property.  This is due to the previous dichotomy, which prevents the existence of a smooth spectrum of values for the EI.  Yet, the computation of the  EI is very sensitive to   randomness and disturbances in the measuring process.  The main object of our note is to show how the EI is affected by the presence of noise. Our starting point is to assume for the EI a general formula obtained by Keller and Liverani \cite{KL} in the context of stationary  dynamical systems enjoying the so-called {\em spectral gap property}. For this reason we call it the {\em spectral formula}. It encompasses all the other rigorous formulae previously obtained for the computation of the extremal index. We will show that when the spectral formula holds, one can obtain a generalized version of the O'Brien formula,  which inspired several numerical algorithms and statistical estimators (see Eq. (3.2.4) in \cite{book}). This is discussed in Proposition \ref{propo} and Remark \ref{IIRR}, which constitute two main contributions of this paper.

The first sections are devoted to the computation of the EI for the
standard observable given by the distribution of the first visit of the trajectory in a decreasing
net of balls around a given point, which we qualify as {\em the target set}.  The system will be perturbed
according to three different classes of noises: sequential, quenched and annealed.  In particular, we will 
focus on the annealed class since it gives  stationary random processes particularly suitable for the application
of the spectral formula. We will show on simple but non trivial  examples that noises with discrete distributions tend to make
the EI less than one, while noises with continuous distributions usually make the extremal index equal to one.
The latter case applies also to deterministic dynamics with the target set affected by disturbances
with smooth densities, which corresponds to common physical situations.

 The final section deals with a different extremal index
we introduced recently with the name of Dynamical Extremal Index and  that
explores the mutual distances between the coordinates of a point moving in a suitable higher
dimensional direct product space. This new index captures the rate of phase space
contraction and  is naturally related to positive Lyapunov exponent(s). We will show again how this index
is sensitive to the different kinds of perturbations described above.

We finally analyze how the random perturbations affect the statistics of the number of visits in the target regions. The expectation
of the limiting law is the reciprocal of the extremal index and the differences in the value of the EI  reflect
in different types of Poisson compound distributions.

\section{The deterministic case}\label{DET}
Our intent is to provide a critical discussion of the application of the EI to time series. Therefore, we start defining it for particular random processes. Let us therefore consider a discrete dynamical systems defined by a map $T$ acting on a smooth manifold $X$ and preserving a Borel measure $\mu$. Suppose $z$ is a point of $X$ and $B(z,r)$ an open ball around $z$ and of radius $r$. Given any other point $x\in X$ let us consider the random variables $H_n(x)$ given by
 \begin{equation}\label{H}
 H_n(x):=\text{\{first time the iterate $T^n(x)$ enters the open ball $B(z, e^{-u_n})$\}},
 \end{equation}
 where the {\em boundary level} $u_n$ is defined by asking that
 \begin{equation}\label{U}
n\  \mu(B(z, e^{-u_n}))\rightarrow \tau,
 \end{equation}
for some positive number $\tau$. Notice that by the stationarity (or invariance) of the measure $\mu$, the quantity $\mu(B(z, e^{-u_n}))$ gives the probability that any iterate of the map $T$ be in the open balls $B(z, e^{-u_n})$.  Moreover as soon as the measure is not atomic, the measure of a ball varies continuously with the radius and this allows us to explicit $u_n$ as a function of $\tau$ and $n$.

We say that we have an extreme value law for the process $H_n$ if
\begin{equation}\label{EVT}
\mu(x; H_n(x) > u_n)\rightarrow e^{-\theta \tau},
\end{equation}
where $0\le \theta\le 1$ is called the {\em extremal index}.

 We remind that the events $H_n > u_n$ are equivalent to the following ones:
\begin{equation}\label{MM}
\{M_n(x)\le u_n\} , \ \text{where} \ M_n=\max\{\phi(x), \phi(Tx), \dots, \phi(T^{n-1}x\},
\end{equation}
and the observable $\phi$ is defined as
\begin{equation}\label{OO}
\phi(x)=-\log d(x,z).
\end{equation}

It is remarkable that for a large class of systems whose transfer operator admits a spectral gap, the extremal index can be explicitly computed. This is done    by G. Keller \cite{GK}, who applied  the perturbative theory developed in \cite{KL}.  This spectral approach has the advantage to give a formula for the EI which holds for general target sets when their measure goes to zero. We will use it in the last section of this note when balls are replaced by tubular neighborhoods.

  Let us first define the event
\begin{equation}\label{FFFF}
\Omega_n^{(k)}(z):= \{\phi(x) > u_n,  \max_{i=1,...,k} \phi(T^ix)\le u_n, \ \phi(T^{k+1}x)> u_n\},
\end{equation}
namely the set of points in $B(z, e^{-u_n})$ whose first $k$ iterates are outside $B(z, e^{-u_n})$ and whose $(k+1)$-th iterate falls again in $B(z, e^{-u_n})$.

Suppose that the following limit exists:
\begin{equation}\label{QK}
q_k:=\lim_{n\rightarrow \infty} \frac{\mu(\Omega_n^{(k)}(z))}{ \mu(B(z, e^{-u_n}))}.
\end{equation}
Then the perturbative theory gives an estimate of the EI:
\begin{equation}\label{TH}
\theta=1-\sum_{k=0}^{\infty}q_k.
\end{equation}
This formula allows us to reproduce a few existing results whenever the target point $z$ is of a particular type. For instance, for one-dimensional expanding systems with strong mixing properties and preserving a measure absolutely continuous with respect to Lebesgue, the extremal index is $1$ everywhere but in periodic points. When we have a periodic point  $z$  of (minimal) period $p,$ only the $q_{p-1}$ term is nonzero and it is of the form \cite{GK, JF}
\begin{equation}
q_{p-1}=\frac{1}{|DT^p(z)|}.
\label{DT}
\end{equation}
The dependence of the EI on the periodicity of the target point $z$ means that the sojourns and returns of the point into the ball $B(z, e^{-u_n})$, which constitute the {\em clusters of exceedances}, keep memory either of the past orbit and of the topological structure of the target point.

This can be made more precise by defining a cluster size distribution $\pi_n(j)$ as the probability of having $j$ returns into the ball up to a rescaled time $n/k_n$, for a suitable  sequence $k_n=o(n)$.   Section 3.2.1 in \cite{book} provides  more details on this derivation; a more formal definition is also given in section 7. It can be therefore proved that (see Sec. 3.3.3 in \cite{book}):
\begin{equation}\label{TH2}
\theta^{-1}= \lim_{n\rightarrow \infty}\sum_{j=1}^{\infty}j\pi_n(j),
\end{equation}
which is interpreted  by saying that the {\em EI is equal to the inverse of the average
cluster size}.\footnote{Abadi et al.  \cite{AAFFFF} built  a  dynamically generated stochastic processes with an extremal index for which that equality does not hold. They considered observable functions maximised at  least two
points of the phase space, where one of them is an indifferent periodic point and another
one is either a repelling periodic point or a non periodic point. We will not consider these kind of observables in this paper.}
In the applications  to dynamical systems, the periodicity
translates into the fact that the clustering $\pi=\lim_{n\rightarrow \infty}\pi_n$ is actually a geometric distribution of parameter $\theta \in (0,1]$, i.e., $\pi_k=\theta(1-\theta)^{k-1}$, for every $k\in \mathbb{N}_0$ \cite{HV}. We will return to this matter in section 7.

The interpretation of the EI given in Eq. (\ref{TH}) has been used in applications to time series climate data and it was also emphasized the {\em local} character of such an indicator and its strong correlation with the fractal local dimension of the invariant measure \cite{nature, messori, FF}.

Let us now consider for $ k\in \mathbb{N}_0$ the event (compare with Eq. (\ref{FFFF})):
\begin{equation}\label{EEEE}
    A^{(k)}_n:=\{\phi(x) > u_n \cap \max_{i=1,\dots,k} \phi(T^ix) \le u_n\}
     \end{equation}
and the quantity:
\begin{equation}
\theta_k^{(n)}:=\frac{\mu(
    A^{(k)}_n)}{\mu(\phi(x) > u_n)}
\end{equation}
where we set $\theta_0^{(n)}=1, \forall n$. By introducing the event $\phi(T^{k+1}\le u_n)$ and its complement, and passing to the limit for $n\rightarrow \infty$, we  get
\begin{equation}\label{QQTT}
q_k=\theta_k-\theta_{k+1},
\end{equation}
where
\begin{equation}\label{lil}
\theta_k=\lim_{n\rightarrow \infty}\theta_k^{(n)},
\end{equation}
when the limit exists.

Notice that as soon as one of the two sequences
$\{q_k\}_{k\in \mathbb{N}}, \{\theta_k\}_{k\in \mathbb{N}}$
is known, the other one is determined as well.

In some circumstances, the limit in Eq. (\ref{lil}) gives exactly the extremal index.
To explain  this point,  we   need to be more precise about the assumptions we make. The spectral approach briefly sketched above applies to a large class of dynamical systems admitting a spectral gap for the transfer (Perron-Fr\"obenius) operator. This  means that the systems have exponential decay of correlations for smooth enough  observables, usually of bounded variation type. This is not sufficient to establish the existence of the limit in Eq. (\ref{QK}). Computations of that limit under various circumstances are  given  in \cite{GK, KL, AFV, CFMVY}. The more standard approach to EVT, which  consists in adapting the classical Leadbetter  theory for i.i.d. random variables to stationary dependent processes \cite{LEAD}, requires  milder mixing conditions that allow to get asymptotic independence for the process.
The theory of O'Brien \cite{OB} was  developed for strictly stationary processes verifying asymptotic independence, which is a strong probabilistic  mixing condition like  the $\phi$-mixing (see \cite{bradely} for the definition). We  stress that asymptotic independence is very difficult to check in practice.  Recently it has been shown that weaker conditions are enough. For our purposes,  it is sufficient to remind condition $D(u_n)$ in section 2.3 of \cite{book} successively improved in \cite{JF}\footnote{This condition holds for instance when the invariant measure $\mu$ is mixing with  decay of correlation fast enough; sometimes a rate of decay as $n^{-2}$ is sufficient.} Asymptotic independence is not enough to get the convergence of the limit distribution of the maxima. One needs to control short returns, which could be in particular affected by clustering. A powerful condition which takes care of that is:
\begin{equation}\label{DD}
D^{(k)}_{u_n}=\lim_{n\rightarrow \infty}n \ \sum_{j=k+1}^{[n/ l_n]-1}\mu(A^{(k)}_n \cap T^{-j}(A^{(k)}_n))=0,
\end{equation}
where $l_n=o(n)$ is a sequence slowly diverging to infinity and verifying the $D(u_n)$ conditions (see Remark 4.1.2 in \cite{book} for the details). We call the integer $k$ the {\em clustering order}. We  now summarize in the following proposition a few  facts that are useful for our paper and which refer to the stationary process $\{\phi \circ T^k\}_{k\in \mathbb{N}}$,
on the probability space $(X, \mu)$:
\begin{proposition}\label{PROPP}
\begin{itemize}
\item Suppose the sequence $u_n$ verifies Eq. (\ref{U}) and condition $D(u_n)$ holds. Moreover suppose that $\liminf_{n\rightarrow \infty}\mu(M_n\le u_n)>0$ and condition in Eq. (\ref{DD}) holds with clustering order $k$. Then
    $$
    \mu(M_n\le u_n)- e^{-\tau \theta^{(n)}_k}\rightarrow 0,
    $$
    as shown by  \cite{CH}, Proposition 1.1.
    \item If the limit in Eq. (\ref{lil}) exists, then {\em the extremal index is given by $\theta_k$}, as shown by Corollary 1.3 in \cite{CH} or  Corollary 4.1.7 in \cite{book}. This is sometimes called the O'Brien formula.
        \item If condition Eq. (\ref{DD}) holds for some particular $k'\in \mathbb{N}$, it also holds  for all $k\ge k'$ and therefore the limit in Eq. (\ref{lil}) for all $k\ge k'$  gives again the extremal index $\theta$, as shown by \cite{JF} and \cite{book}, Remark 4.1.10.
\end{itemize}
\end{proposition}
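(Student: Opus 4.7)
The plan is to establish bullet (i) first by a Leadbetter--O'Brien blocking argument; bullets (ii) and (iii) then follow by a limit passage and by a monotonicity observation on $A_n^{(k)}$ in the clustering order $k$.

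For (i), I would partition $\{0,1,\ldots,n-1\}$ into $k_n := [n/l_n]$ consecutive blocks of length $l_n$ separated by short guard gaps of length $t_n = o(l_n)$. Condition $D(u_n)$, together with $l_n \mu(\phi > u_n) \to 0$ (which follows from $n\mu(\phi>u_n)\to \tau$ and $l_n=o(n)$), gives the standard decoupling bound
\begin{equation*}
\mu(M_n \le u_n) - \mu(M_{l_n} \le u_n)^{k_n} \to 0.
\end{equation*}
Inside a single block I decompose by the position of the last exceedance,
\begin{equation*}
\mu(M_{l_n} > u_n) = \sum_{i=0}^{l_n-1} \mu\!\bigl(\phi(T^i x) > u_n,\ \phi(T^j x) \le u_n \text{ for } i < j \le l_n\bigr),
\end{equation*}
truncate the forward window at depth $k$ to replace each summand by $\mu(A_n^{(k)})$, and absorb the overcount from later returns into the sum appearing in (\ref{DD}). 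By stationarity the main term becomes $l_n \mu(A_n^{(k)}) = l_n\, \theta_k^{(n)} \mu(\phi > u_n)$, while condition (\ref{DD}) forces the remainder to be $o(1/k_n)$. Raising $1-\mu(M_{l_n}>u_n)$ to the $k_n$-th power and using $l_n k_n \mu(\phi > u_n) \to \tau$ then yields $\mu(M_n \le u_n) - e^{-\tau \theta_k^{(n)}} \to 0$. The assumption $\liminf \mu(M_n\le u_n) > 0$ rules out degenerate subsequences on which the exponential approximation would collapse.

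Bullet (ii) is then immediate: if $\theta_k^{(n)} \to \theta_k$, the right-hand side above converges to $e^{-\tau \theta_k}$, which matches the defining relation (\ref{EVT}) and identifies $\theta_k$ as the extremal index. For (iii), the key observation is monotonicity: requiring more iterates to remain below $u_n$ shrinks the event, so $A_n^{(k)} \subset A_n^{(k')}$ whenever $k \ge k'$; in addition the summation range in (\ref{DD}) starts at $j=k+1$ and hence shortens as $k$ grows. Both effects give $D^{(k)}_{u_n} \le D^{(k')}_{u_n} = 0$, and (i)--(ii) apply with the enlarged $k$.

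The main obstacle is the short-returns step inside a block: one must show that the mass coming from returns within $k$ iterates is cleanly absorbed into the factor $\theta_k^{(n)}$, while returns separated by more than $k$ but less than $l_n$ contribute a negligible remainder controlled by $D^{(k)}_{u_n}$. Balancing the two error sources---the $D(u_n)$-error in the between-block decoupling and the $D^{(k)}_{u_n}$-error in the within-block truncation---forces a simultaneous choice of $l_n$ and $t_n$, and this quantitative juggling is precisely the content of Proposition 1.1 in \cite{CH}.
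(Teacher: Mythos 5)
The paper does not actually prove this proposition: it is stated as a summary of results imported from \cite{CH}, \cite{JF} and \cite{book}, so there is no in-paper argument to compare against, and your sketch reconstructs exactly the standard proof behind those citations --- the Leadbetter-type block decoupling under $D(u_n)$, the within-block truncation at depth $k$ controlled by Eq.~(\ref{DD}) yielding $\mu(M_n\le u_n)\approx e^{-\tau\theta_k^{(n)}}$, and, for the third bullet, the monotonicity $A_n^{(k)}\subset A_n^{(k')}$ for $k\ge k'$ combined with the shrinking summation range, which is precisely Remark~4.1.10 of \cite{book}. The one slip is notational: in Eq.~(\ref{DD}) the paper's $l_n$ is the slowly diverging \emph{number} of blocks and the anticlustering sum runs over lags up to the block \emph{length} $[n/l_n]$, whereas you take $l_n$ as the block length and $k_n=[n/l_n]$ as the number of blocks; with your labels the sum in Eq.~(\ref{DD}) would need to run up to $l_n$, so the roles of the two sequences should be swapped to match the condition as written. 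This does not affect the substance of the argument.
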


        When the second item in the Proposition holds  for a process with clustering order $k$ we have
\begin{equation}\label{CO1}
q_j=\theta_j-\theta_{j+1}=0, \ \forall j\ge k.
\end{equation}
The natural question is therefore to ask when condition (\ref{DD}) is verified. For a large class of dynamical systems, in particular verifying the assumptions of theorem 4.2.7 in \cite{book},  it can be shown that condition (\ref{DD}) holds with $k=0$ if the target point $z$ is not periodic, and with $k=p$ if $z$ is periodic of prime period $p$ (see Proposition 4.2.13 in \cite{book}). In particular,    for one-dimensional expanding systems with strongly mixing properties and preserving a measure absolutely continuous with respect to Lebesgue, the extremal index is
$1-q_{p-1}=1-\frac{1}{|DT^p(z)|}$
in periodic points $z$  of (minimal) period $p$. For higher dimensional systems, condition (\ref{DD}) gives a precise value for $\theta$ (see for instance \cite{diffeo}).
\begin{remark}\label{CR}
  For the preceding example around periodic points, the spectral and the standard approaches both give $q_{p-1}$ and $\theta_p$ different from zero. Yet, the spectral approach shows that all $q_j$ are zero for $j\neq p-1$. This is therefore coherent with Eq. (\ref{CO1}) for which  $\theta_j=\theta_{j+1}$ for  $j\ge p$.    Let us now consider $q_{p-1}=\theta_{p-1}-\theta_p$. The quantity $\theta_{p-1}$ is well defined as the limit of $\theta_{p-1}^{(n)},$ when $n$ goes to infinity and it is equal to $1$. This implies that the condition (\ref{DD}) is violated, because otherwise $\theta_p$ should be equal to $1$ as well.  The fact that all the $\theta_{j}$, for $ j\le p-1$ are equal to $1$ is not surprising: it means that the full conditional  measure of points in the ball $B(z, e^{-u_n})$ are outside it when iterated up to $p-1$ times.
\end{remark}
In the assumptions of Proposition \ref{PROPP}, the EI is equal to $\theta_k$ when the limit in Eq. (\ref{lil}) defining this quantity exists and $k$ being the clustering order. We now show that this is a particular case of a more general result.
\begin{proposition}\label{propo}
Let us suppose that the limits in Eq. (\ref{QK}) defining the quantities $q_k$ exist for any $k\ge 0$. Then the extremal index $\theta$ is given by
\begin{equation}\label{FF}
\theta=\lim_{k\rightarrow \infty}\theta_k.
\end{equation}
\end{proposition}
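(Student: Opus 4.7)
The plan is to reduce the statement to the spectral formula (\ref{TH}) via a telescoping argument built on the pre-limit analogue of (\ref{QQTT}). The key observation is that for each fixed $n$ there is a clean finite decomposition that telescopes, so that both limits $n\to\infty$ and $k\to\infty$ can be taken in sequence with only elementary justifications.

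First, I would work at finite $n$. Conditioning on whether $\phi(T^{k+1}x)>u_n$ yields the disjoint decomposition $A^{(k)}_n = \Omega_n^{(k)}(z)\sqcup A^{(k+1)}_n$, exactly as in the derivation of (\ref{QQTT}). Dividing by $\mu(\phi>u_n)=\mu(B(z,e^{-u_n}))$ and writing $q_k^{(n)}:=\mu(\Omega_n^{(k)}(z))/\mu(B(z,e^{-u_n}))$, this becomes $\theta_k^{(n)} = q_k^{(n)}+\theta_{k+1}^{(n)}$. Since $\theta_0^{(n)}=1$ by definition, iterating gives the telescoping identity
\begin{equation*}
\theta_k^{(n)} = 1 - \sum_{j=0}^{k-1} q_j^{(n)}.
\end{equation*}
Observe also that $A^{(k)}_n\supseteq A^{(k+1)}_n$, so $\theta_k^{(n)}$ is non-increasing in $k$ and lies in $[0,1]$; equivalently, every partial sum $\sum_{j=0}^{k-1}q_j^{(n)}$ lies in $[0,1]$.

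Second, I would take $n\to\infty$ for each fixed $k$. By hypothesis $q_j^{(n)}\to q_j$ for every $j$, and the sum has only finitely many terms, so $\theta_k^{(n)}\to 1-\sum_{j=0}^{k-1}q_j$. In particular the limit $\theta_k$ in (\ref{lil}) exists and equals this value. Letting now $k\to\infty$: since $q_j\ge 0$ and the partial sums inherit the upper bound $1$ from the pre-limit, the series $\sum_{j=0}^{\infty}q_j$ converges and $\theta_k\to 1-\sum_{j=0}^{\infty}q_j$. Finally, the spectral formula (\ref{TH}) identifies the right-hand side with $\theta$, which yields (\ref{FF}).

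The argument is essentially bookkeeping once one has (\ref{TH}) and (\ref{QQTT}) in hand; no delicate interchange of limits is needed, because the finite-$n$ identity is already a telescoping sum and $n\to\infty$ is taken only for finitely many terms before sending $k\to\infty$. The substantive input is the spectral formula, whose applicability is implicit in the existence of all the $q_k$. The only subtle bit is noting that the uniform bound $\sum_{j=0}^{k-1}q_j^{(n)}\le 1$ survives the $n$-limit and underwrites convergence of the infinite series of $q_j$'s.
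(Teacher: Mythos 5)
Your proof is correct and takes essentially the same route as the paper's: both rest on the spectral formula (\ref{TH}) combined with the telescoping identity $\theta_{k+1}^{(n)}=1-\sum_{j=0}^{k}q_j^{(n)}$ coming from the disjoint decomposition $A^{(k)}_n=\Omega_n^{(k)}(z)\sqcup A^{(k+1)}_n$. You merely make explicit the finite-$n$ bookkeeping and the uniform bound $\sum_{j}q_j^{(n)}\le 1$ that the paper leaves implicit.
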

\begin{proof}
We noticed that whenever the sequence $q_k$ exists, the same happens for the sequence $\theta_k$. Then by a simple telescopic trick we get
 $$
\theta=\lim_{k\rightarrow \infty}\left(1-\sum_{j=0}^kq_j\right)=\lim_{k\rightarrow \infty}\theta_{k+1}.
$$
\end{proof}
\begin{remark}\label{IIRR}
Under the assumptions of the preceding proposition, we got that
\begin{equation}\label{GOF}
\theta=\lim_{k\rightarrow \infty}\lim_{n\rightarrow \infty}\theta_{k}^{(n)}.
\end{equation}

We stress that the limits in Eq. (\ref{lil}) or Eq. (\ref{QK}) have been computed up to now and in the framework of dynamical systems only around periodic points, or around the diagonal in coupled systems (see \cite{cml}) where at most only one of them is different from zero. We will present later on  further examples of explicit computations of the $q_k$. In particular section \ref{DN} will give an example of a stationary random dynamical system for which all the $q_k$ are different from zero and from each other, which implies that condition (\ref{DD}) is violated at all orders. In this respect our Eq. (\ref{GOF}) is a generalization of the O'Brien formula.
\end{remark}

\subsection{Statistical estimators of the extremal index}
\label{est}
We saw in the preceding section that  the value $\theta_p$ provides the extremal index if the target point $z$ is of (minimal) period $p$. It is therefore not surprising that
estimators based on the O'Brien formula have been proposed for the computation of the EI. In particular the following formula has been introduced to compute the extremal index associated to time series in \cite{wiley} (formula 10.26): we start with
 a trajectory $(x_0,Tx_0,\dots,T^{N-1}x_0)$ and for a high enough threshold $u$ we compute:

\begin{equation}\label{MC}
\hat\theta_m :=\frac{ \sum_{i=0}^{N-1-m} \mathbf{1}(\phi(T^i x_0) > u \cap \max_{j=1,...,m} \phi(T^{j+i}x_0) \le u)/(N-m)}{ \sum_{i=0}^{N-1}\mathbf{1}(\phi(T^ix_0) \ge u)/N}.
\end{equation}

If we are in presence of a periodic point of period $m$ for the observable $\phi(x)=-\log d(z,x)$, then $\hat\theta_m$ is a good approximation of the EI.

We notice that when $m=1$,   Eq. (\ref{MC}) gives the
likelihood estimator of S\"uveges $\hat\theta_{Su}$ \cite{suveges}, and the obtained result is an indicator of the persistence of the system close to a state $z$ (see for example \cite{nature}). We remind that for a high quantile $p$ of the time series, the S\"uveges estimator  is given by:

 \begin{equation}\label{suveges}
\hat\theta_{Su}=\frac{\sum_{i=1}^{N_c}(1-p)S_i +N+N_c -\sqrt{(\sum_{i=1}^{N_c}(1-p)S_i +N+N_c)^2-8N_c\sum_{i=1}^{N_c}(1-p)S_i}}{2\sum_{i=1}^{N_c}(1-p)S_i},
\end{equation}

where $N$ is the number of exceedances over $\tau$, $N_c$ is the number of clusters of two or more exceedances, and $S_i$ is the size of the cluster $i$. This estimator is presented in \cite{suveges}. A {\em Matlab} code and numerical details related to this computation in the context of dynamical systems are available in \cite{book}.

We warn that in presence of periodicity of order, say $p$, the estimator of Eq. (\ref{MC}) is reliable when it is strictly less than $1$, because all the $\theta_m$, for $m<p$ are equal to $1$. This observation applies as well to the computation of the $q_k$: they are all zero up to $q_{p-1}$.

Ferreira \cite{ferreira} reviews several  numerical algorithms  to compute the EI (including those shown before). In this paper we choose to work with the $q_k$ and we list a few features of this approach.

\begin{itemize}
\item In the presence of periodicity,  we do not need to check condition Eq. (\ref{DD});
 the clustering order, if any,   is obtained automatically by progressing in the evaluation of the $q_j$.  We will show in sections \ref{DN} and \ref{MMTT} examples of stationary processes constructed with random perturbation where several $q_k$ are different from zero.
    \item Eq. (\ref{TH}) works as well when the ball $B(z, e^{-u_n})$ is replaced by any other measurable sets whose measure goes to zero when $n\rightarrow \infty$. This comes to modify suitably the observable $\phi$. A concrete example will be given in section \ref{SDEI}.
        \item Whenever one of the $q_j$ is strictly positive, we get an EI strictly less than $1$.  This will be extensively used in sections \ref{DN}, \ref{MMTT}, \ref{DND}. Notice that it is equivalent to find a certain   $\theta_k<1$ for $k\ge 1$.

\end{itemize}

 In practice, one cannot compute all the $q_j$ values and must stop at an order $m$. We stress that the sequence $\{q_j\}$ is rapidly  decreasing to $0$ and the quantity $\theta_m=1-q_0- \cdots -q_{m-1}$, where $m$ should be taken not too big, should give a fair estimate of $\theta$.  The approximate value $\hat{q_j}$ for $q_j$ is obtained by considering a trajectory of length $N$ as

\begin{equation}\label{AAQ}
\hat q_j=\frac{ \sum_{i=0}^{N-2-m} \mathbf{1}(\phi(T^i x_0) \ge u \cap \max_{l=1,\dots,m} \phi(T^{l+i}x_0) < u \cap \phi(T^{i+j+1}x_0) \ge u)/(N-1-m)}{ \sum_{i=0}^{N-1}\mathbf{1}(\phi(T^ix_0) \ge u)/N}.
\end{equation}

\subsection{Numerical estimates of the extremal index in deterministic systems}

We present below the computations for different maps and various target points $z$ and show the difference between the approximate estimate $\hat\theta_m$ of $\theta_m$ (we performed our computations at the order $m=5$) and S\"uveges estimate (with order $0$). We find that $\hat\theta_m$ obtains values close to the predictions of Eq. (\ref{DT}), while the other estimate gives $1$ for points of period larger than $1$.

\begin{tabular}{|c|c|c|c|c|c|c|}
  \hline
  Application &  $z$ & period & Theoretical value & $\hat\theta_{Su}$ & $\hat\theta_5$ & Uncertainty\\
  \hline
  $2x$ mod1 & 4/5 & 4 & 0.9375 & 1 & 0.9375 & 0.0024\\
  $2x$ mod1 & 0 & 1 & 0.5 & 0.5007 & 0.5005 & 0.0028 \\
  $2x$ mod1 & 1/3 & 2 & 0.75 & 1 & 0.7508 & 0.0017\\
  $2x$ mod1 & $1/\pi$ & not periodic & 1 & 1& 1 &0\\
  Cat's map & $(1/3,2/3)$ & 4 & 0.9730 \cite{diffeo}& 1  & 0.9732 & $8.3.10^{-4}$ \\
  Cat's map & $(1/2,1/2)$ &3 & 0.9291 \cite{diffeo} & 1 & 0.9292 & $9.32.10^{-4}$ \\
  Cat's map & $(0,0)$ & 1 & 0.5354 \cite{diffeo} & 0.5352 & 0.5350 & $0.0018$ \\
  Cat's map & $(1/\sqrt2,\pi-3)$ & not periodic & 1 & 1 &1 &0\\
  \hline
\end{tabular}\\

\captionof{table}{Comparison of estimates of $\theta$ found with the different methods for the $2x-\mathrm{mod} 1$ map and Arnol'd cat map \cite{cat}, defined in $\mathbb{T}^2$ by $T(x,y)=(x+y,x+2y)$. For all of the computations, we averaged our results over $20$ trajectories of $5.10^7$ points and took as a threshold the $0.999$-quantile of the observable distribution. The uncertainty is the standard deviation of the results.}
 \label{table}

\begin{figure}[h!]
    \centering
    \begin{subfigure}[t]{0.5\textwidth}
        \centering
        \includegraphics[height=2.in]{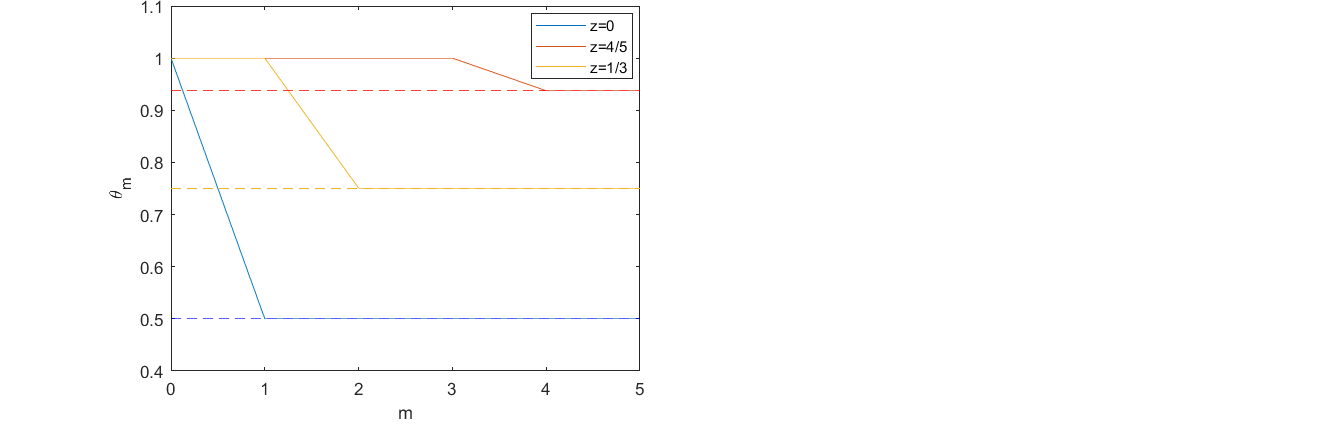}
        \caption{$2x\mod 1$}
    \end{subfigure}%
    ~
    \begin{subfigure}[t]{0.5\textwidth}
        \centering
     \includegraphics[height=2in]{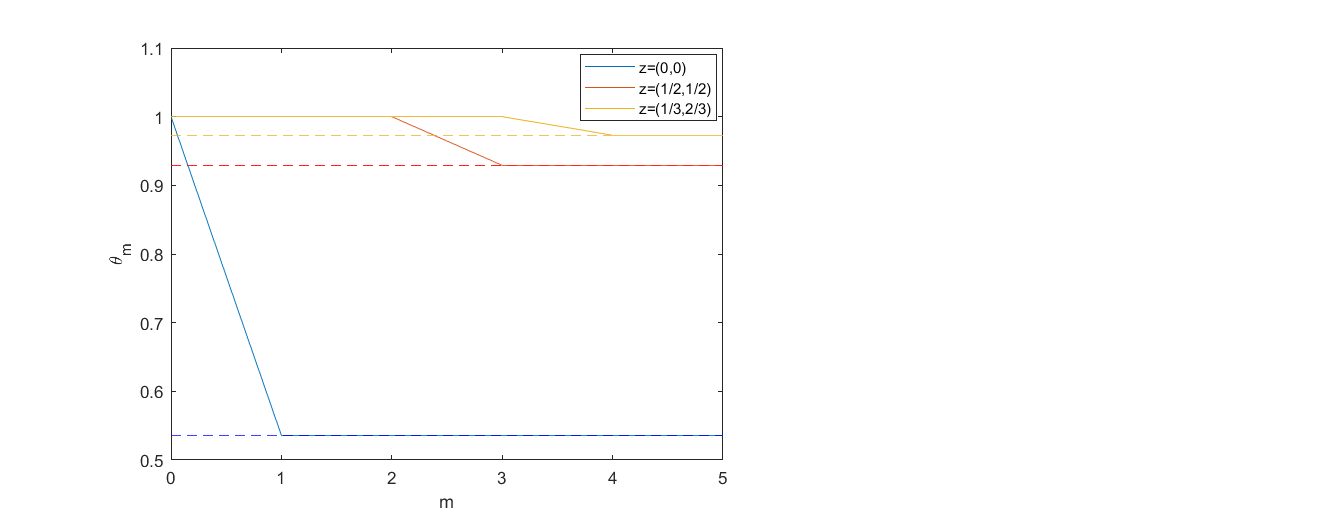}
         \caption{Cat's map}
    \end{subfigure}
   \caption{Evolution of $\hat\theta_m$ with $m$, for different target points and different maps. The numerical values are found in table \ref{table} and the parameter used in the text. The dashed line are the theoretical values of $\theta$.}
   \label{q}
\end{figure}

We observe in figure \ref{q} that for a target point of period $k$, $\hat\theta_m$ is equal to 1 for $m<k$, is equal to $\theta$ for $m \ge k$, due to the fact that $\hat q_{k-1}$ is non zero. For $m\ne k$, we also have $\hat q_{m-1} =0$, as expected.
Although the S\"uveges estimator is not in principle suitable to detect periodic points with period strictly larger than $1$, it is very often used to analyze time series since it is  particularly simple to implement. But there is a more interesting reason.
If the process is not periodic, for instance generated by random dynamical systems like those presented in the next sections, whenever the  S\"uveges estimator is strictly less than one, the same happens for the EI, thanks to our Proposition \ref{propo}.

Instead, as we saw in the table above, the difference with real situations of larger periodicity could be important. In such cases, the estimate $\hat\theta_m$ performs systematically better. We will use this estimator in the numerical computations of this paper.

 To evaluate the differences between the two estimates in high dimensional datasets, we test them on the North Atlantic daily sea-level pressure data  described in \cite{nature}. For each atmospheric state $z$, the EI index associated to the observable $-\log \text{dist}(z,.)$ was evaluated using the S\"uveges estimate and called the {\em inverse persistence} of $z$. The two estimations are shown in Figure \ref{climatebis}

 Estimates using $\hat\theta_5$ for these data are systematically lower than those with the order $0$ method (in average $0.065$ less), due to the contribution of some $\hat q_k$ for $k > 0$  (see the empirical distribution in figure \ref{climate}). In Figure \ref{climatebis} we present a scatter plot of the daily values of $\theta$ obtained with the two estimators. We see that there is a strong linear relation between the two estimates with an offset of about 0.1 days$^{-1}$. The estimation of the cross-correlation coefficient, namely the zeroth lag of the normalized covariance function, yields 0.94, meaning that the information contained in the two estimators is the same except for a restricted set of states $z$. Moreover, if we remove the time averaged values $\langle \hat\theta_{Su}\rangle$ of the S\"uveges estimator and  $\langle \hat\theta_5\rangle$ of the new estimator from the time series, we can compare the distribution of $\hat\theta_{Su}-\langle \hat\theta_{Su}\rangle$ with that of $\hat\theta_5-\langle \hat\theta_5\rangle$. For this comparison, we have used  a two sided Kolmogorov-Smirnov test and  verified  the null hypothesis that the estimators are from the same continuous distribution at the 5$\%$ confidence level.

%
%
%

The lowest differences between the two estimators are found for patterns close to the average field $z$. This corresponds to anomalies close to 0 hPa in Figure~(\ref{climatebis}b). Instead, large differences between estimators correspond to a peculiar $z$ pattern, consisting of a deep low pressure anomaly over the north of the domain and an anticyclonic anomaly over the southern part of the domain. This pattern resembles those observed in \cite{nature} for the minima of the local dimensions $d(z)$. This analysis confirms that the two estimators are different even beyond the simple shift in the values. The patterns where the two estimators mostly diverge can be thought as the pre-images of higher order periodic points of the attractor underlying the mid-latitude atmospheric circulation. In this sense, the combined use of both $\hat\theta_{Su}$ and $\hat\theta_5$ estimators could be useful to detect these special points of the dynamics in climate data and other natural phenomena.

\begin{figure}[h!]
        \centering
        \includegraphics[height=2.in]{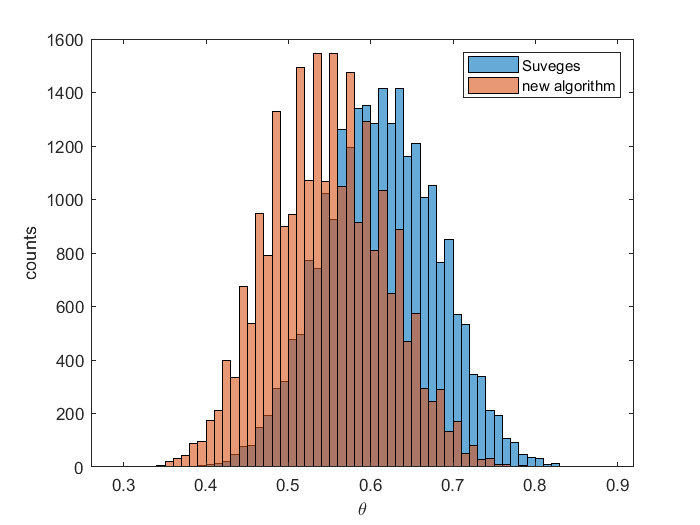}
        \caption{Comparison on the distribution of $\theta$ with the two estimates. The blue distribution is obtained using $\hat\theta_{Su}$ and the red one using $\hat\theta_5$ (for both taking as a threshold the 0.99-quantile of the observable distribution).}
   \label{climate}
\end{figure}

\begin{figure}[h!]
        \centering
        \includegraphics[width=0.9\textwidth]{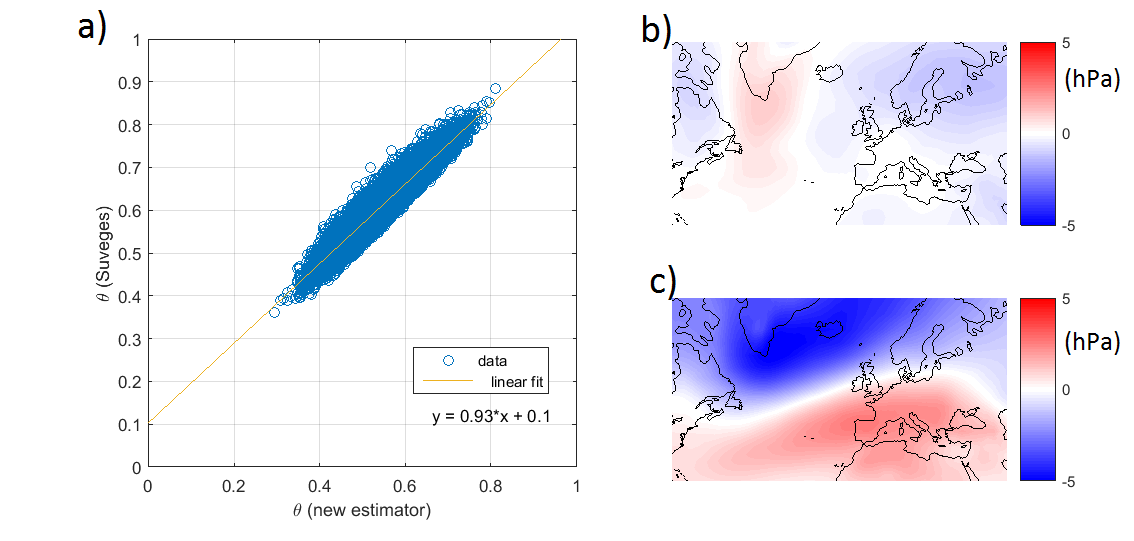}
        \caption{a) Scatter plot of $\hat\theta_{Su}$ (the S\"uveges estimator) vs $\hat\theta_5$ (the new estimator introduced in this work). Average map of the 5$\%$ sea-level pressure patterns such that the residual between $\hat\theta_{Su}$ and  $\hat\theta_5$ are smaller than the 5\% percentile (b) or larger (c) than the 95\% percentile, for both taking as a threshold the 0.99-quantile of the observable distribution.}
   \label{climatebis}
\end{figure}

%
%

\section{The random case: non-stationary situations (quenched noise)}
\label{nonstat}

The dynamics and the consequent detection of the extremal index could be affected in different ways. We begin with the dynamics by distinguishing first two cases:
\subsection{Sequential systems}
These systems are defined by concatenating maps chosen in some set, usually in the close neighborhood of a given map. As a probability measure one usually takes some ambient measure like Lebesgue, which makes the process given by the sequences of concatenations {\em non-stationary}. The extremal value theory and the extremal index  must be redefined (see \cite{FFV}, in particular Eq. (2.8)). In section 4.5 of the aforementioned paper,  we showed an example of a  sequential system modeled on maps chosen in the neighborhood $\mathcal{U}$ of a given  $\beta$-transformation $T_{\beta}$. According to suitable choices  of $\mathcal{U}$ it is possible to show either that the EI of the unperturbed map  and of the sequential system are the same, or the two  differ, in particular when the elements of the concatenation are far enough  from $T_{\beta}$. In this case the EI is simply $1$. Sequential  systems are a model for non-autonomous physical systems that do not admit a natural stationary probability measure. Statistical limit theorems can be proved as well in this context provided the observables are suitably centered (see \cite{CR, NV} for more details). The previous example with $\beta$-transformations suggests that when the maps are far enough from each other, the EI converges to $1$.

To the best of our knowledge, there are not numerical investigations of extreme value behaviors for sequential systems. We propose one of them in the Appendix.

\subsection{Random fibred systems} The sequential systems are in some sense too general, since there are no real prescriptions on the choice of the maps. A much more realistic class of non-autonomous systems is given by  random  fibred systems. They are constructed by taking a  driving map $\sigma$ that preserves a probability measure $\nu$ on the measurable space $\Omega$, and  which codes a family of transformations $f_{\omega},$ for $\omega \in \Omega$ on the fiber $X$ {\em via} the composition rule
        \begin{equation}\label{FFF}
        f^n_{\omega}:x=f_{\sigma^{n-1}\omega}\circ\cdots\circ f_{\omega}.
        \end{equation}
        The driving system encodes the external influences
on the system of interest: it acts deterministically in the choice of the evolution transformations on the fibers. As L.  Arnold wrote at the beginning of his monograph \cite{LA}: 

\begin{center} {\em Imagine a mechanism which at each discrete time $n$ tosses a (possibly complicated, many-sided) coin to randomly select a mapping $\phi_n$ by which a given point $x_n$ is moved to $x_{n+1}=\phi(x_n).$ The selection mechanism is permitted at time $n$ to remember the choices made prior to $n$, and even to foresee the future. The only assumption made is that the same mechanism is used at each step. This scenario, called {\em product of random mapping}, is one of the prototypes of a random dynamical systems.}
\end{center}

The mechanism used at each step is just the driving system. It is interesting to observe for the objectives of this paper, that random fibred systems have been used to analyze the  transport
phenomena in non-autonomous dynamical systems, such as geophysical
flows (see the enlightening review article \cite{CGT}). In order to perform statistics on these systems, we notice that  a family of sample measures $\mu_{\omega}$ lives on the fiber, which verifies the quasi-invariant equation $(f_{\omega})_{*}\mu_{\omega}=\mu_{\sigma \omega}$, where $(f_{\omega})_{*}$ is  the push-forward of the measure.   These sample measures will be taken as the probability measures that describe the statistical
properties along the fiber and they do not give rise to stationary processes. The relation between the sample measures and $\nu$ is that the measure $\mu:=\int \mu_{\omega}d\nu(\omega)$ is preserved by the skew (deterministic)  transformation $F(x, \omega)=(f_{\omega}(x), \sigma \omega),$ acting on the product space $X\times \Omega.$ The extremal index can be  suitably defined and it could be different from $1$ (see Corollary 5.1 and Theorem 5.3 in \cite{FFV} dealing with {\em random subshift}).
Transparent examples are constructed in the following  subsections.

\subsubsection{Random Lasota-Yorke maps}\label{CIR} \cite{FFMV}. Take
$\Omega=I^{\mathbb{Z}},$ where  $I=(1,\dots,m)$ is a finite alphabet with $m$ letters.
We associate to each letter  a piecewise expanding  map of the interval $f_{\omega}$  satisfying some smoothness and distortion standard assumptions. The map $\sigma$ is therefore the bilateral shift and $\nu$ any
ergodic shift-invariant non-atomic probability measure, for instance the Bernoulli measure with weights $p_1, \dots, p_m$. The sequence of concatenations is given by $f^k_{\omega}=f_{\omega_k}\circ\cdots\circ f_{\omega_1}$, with $\omega_j\in I$ and $j=1,\dots,k$ are the first $k$ symbols of the word $\omega$.\footnote{Notice that this is equivalent to Eq. (\ref{FFF}) by defining the map $\omega\rightarrow f_{(\omega)'}$, where $(\omega)'$ is the $\omega_1$ coordinate of $\omega$.} In this setting it can proved that the sample measures are equivalent to the  Lebesgue measure $m$ and for $m$-almost any target point $z\in X$ and $\nu$ almost any realization  $\omega \in \Omega$, the EI is equal to $1$ (see \cite{FFMV}). This corresponds to a quenched result since it depends on the choice of the sequences $\omega$.

\subsubsection{Rotations} \label{ROT}Another more physical quenched example is constructed in the following way. Let us take as $\Omega$ the unit circle $\mathbb{S}^1$ and as $\sigma$  the irrational rotation: $\sigma(\omega)=\omega+\alpha-\mathrm{mod}1$, $\omega\in \mathbb{S}^1$ with $\alpha\in \mathbb{R}$. Then we define $T(x)=3x-\mathrm{mod}1$ and make the correspondence:
\begin{align*}
\omega \rightarrow f_{\omega} \ \text{such that} \  f_{\omega}(x)=T(x)+\omega -\mathrm{mod}1.\\
 \sigma \omega \rightarrow f_{\sigma \omega}\  \text{such that} \  f_{\sigma \omega}(x)=T(x)+\sigma \omega- \mathrm{mod}1.\\
&\vdots\\
\sigma^k \omega \rightarrow f_{\sigma^k \omega}\  \text{such that} \ f_{\sigma^k \omega}(x)=T(x)+\sigma^k \omega-  \mathrm{mod}1.
\end{align*}
and so on, by composing after that as: $f_{\sigma^k \omega}\circ\cdots f_{\omega}$.

This last example was implemented numerically using the estimate $\hat\theta_5$ for different trajectories and choices of $\alpha$, either irrational or rational. We found an extremal index equal or very close to $1$ in all cases (see table \ref{table2}).\\

\begin{remark}
In the preceding two examples we considered the extremal index, but we did not indicate any method to compute it. We are in fact outside the stationary regime which provided us with the various formulae described in the previous  sections. Actually a definition of the EI in the non-stationary case is given in formula (2.8) in \cite{FFV}. This formula relies also on a suitable definition of the boundary level $u_n$ given in Eq. (2.2) in the aforementioned paper. The above formula (2.8) is a slight modification of the O'Brien formula and it could be reasonably recovered numerically by using Eq. (\ref{MC}) or the S\"uveges formula when we are not in presence of clustering. Although the EI was computed rigorously for the Random Lasota-Yorke maps as indicated in section \ref{CIR}, we do not dispose of a rigourous argument for the rotations and the numerical results we presented have been obtained with the $\hat\theta_5$ estimate. We observe that we could {\em assume} as a definition of the extremal index in these non-stationary situations the reciprocal of the expectation given by the distribution of the number of visits (see Eq. (\ref{REI}) in section 6). We will show in section \ref{roto} that the value for the EI computed in that way is the same as that provided by the $\hat\theta_5$ estimate.
\end{remark}


\begin{center}
\begin{tabular}{|c|c|c|c|c|}

  \hline
  Trajectory &  $\alpha=1/\pi$  & $\alpha=\sqrt2-1$ & $\alpha=4/5 $  \\
  \hline
  1 & 0.9995 & 0.9995 & 1\\
  2 & 0.9995 & 0.9996 & 1\\
  3 & 0.9997 & 0.9998 & 1\\
  4 & 0.9997 & 0.9995 & 1\\
  5 & 0.9996 & 0.9996 & 1\\
  6 & 0.9994 & 0.9996 & 1\\
  \hline

\end{tabular}

\captionof{table}{Values of $\theta$ found for different values of $\alpha$ and for different trajectories of length $10^7$. We used the estimate $\hat\theta_5$ and the $0.999-$quantile of the observable distribution as a threshold.}
 \label{table2}
\end{center}

\section{The random case: stationary situations (annealed noise)}
The preceding two situations dealt with non-stationary processes. We now focus on stationary processes given (i)  by   i.i.d. randomly chosen transformations, and (ii) by i.i.d. moving target sets.
\subsection{I.i.d. random transformations}
We now suppose that the maps $f_{\omega}$ are no longer driven by the measure-preserving map $\sigma$, but are chosen in an i.i.d.  manner.

\subsubsection {Additive noise} \label{AN}  A common way to build such a process is to choose a map $f$ and to construct the family of maps $f_{\xi}=f+\epsilon \xi$, where $\xi$ a random variable sampled from some distribution $\mathbb{G}$. This distribution can be the uniform distribution on some small ball of radius $\epsilon$ around $0$,  where $\epsilon$ is the intensity of the noise.  The iteration of the single map $f$ is now replaced by the concatenation $f_{\xi_n}\circ \cdots \circ f_{\xi_1}$,  where the $\xi_k$ are i.i.d. random variables with distribution $\mathbb{G}$. If $T$ has good expanding or hyperbolic properties, it is possible to show the existence of the so-called {\em stationary measure} $\rho_s$, verifying for any real bounded function $g$: $\int g d\rho_s=\int \int g\circ T_{\xi}d\rho_sd\mathbb{G},$ (see \cite{book}
Chap. 7, for a general introduction to the matter). If we now  take the probability product measure $\mathbb{Q}:=\mathbb{G}^{\mathbb{N}}\times \rho_s,$ any process of type $\{g(f_{\xi_n}\circ \cdots \circ f_{\xi_1})\}_{n\in \mathbb{N}}$ is $\mathbb{Q}$-stationary. The extremal index is obtained again by applying Eq. (\ref{TH}) where the sets  $\Omega_n^{(k)}(z)$ and $\mu(B(z, e^{-u_n}))$ are now weighted with the measure $\mathbb{Q}$.
The expectation being taken with respect to $\mathbb{G}$ too, makes this an {\em annealed} type random perturbation, while the fibred perturbation described above is a {\em quenched} one, the expectations depending on the realization $\omega.$  In the article \cite{AFV} we rigorously proved that for a large class of piecewise expanding maps of the interval perturbed with additive noise with the uniform distribution $\mathbb{G}$, the extremal index is $1$ as soon as $\epsilon$ becomes positive. The same happens for other kinds of maps and smooth distributions, as we showed in \cite{FFFFF} almost numerically. We believe that this mostly happens for the systems perturbed with i.i.d. noise, since the latter destroys all the periodic points. Nevertheless it is possible to construct annealed examples giving an EI less than one. This will be done in the next section.

\subsubsection{Discrete noise} \label{DN}

Let us  consider:
 \begin{itemize}
 \item Two maps $f_0=2x-\mathrm{mod} 1$ and $f_1=2x+b -\mathrm{mod}  1$, $0<b<1.$
     \item A {\em fixed} target set around $0$: $H_{n}(0):=B(0, e^{-u_n}),$ the ball of radius $e^{-u_n}$ around $0$. Notice that $0$ is a fixed point of $f_0$, but not of $f_1$.

 \end{itemize}

The indices $\xi_1, \dots, \xi_n$ in the concatenation $f_{\xi_n}\circ \cdots \circ f_{\xi_1}$ are  i.i.d. Bernoulli random variables with distribution, for instance, $\mathbb{G}=p\delta_{0}+(1-p)\delta_{1},$  with $p=\frac12$. The spectral theory developed in \cite{AFV} applies as well to this case, giving an absolutely continuous stationary measure $\rho_s$. Proposition 5.3 in \cite{AFV} gives for $q_0$:
\begin{equation}\label{QCC}
q_0= \lim_{n \rightarrow \infty}\frac{\int d\mathbb{G} (\xi) \rho_s(H_{n}(0)\cap f^{-1}_{\xi}H_{n}(0))}{\rho_s(H_{n}(0))}=\lim_{n \rightarrow \infty}\left[\frac12\frac{ \rho_s(H_{n}(0)\cap f^{-1}_{0}H_{n}(0))}{\rho_s(H_{n}(0))}+\frac12\frac{ \rho_s(H_{n}(0)\cap f^{-1}_{1}H_{n}(0))}{\rho_s(H_{n}(0))}\right]
\end{equation}
Since $0$ is not a fixed point of $f_1$, a standard argument gives $0$ to the limit in the square bracket on the right hand side. Instead by the mean value theorem applied to the term in the left square bracket we get in the limit of large $n$:
$q_0=\frac14$.

\begin{remark}\label{IR}
The preceding example could be interpreted as a perturbation of the map $f_0=2x-\mathrm{mod} 1$ around its fixed point $0$. Instead we change the point of view and consider it as a perturbation of the map $f_1= 2x+b-\mathrm{mod} 1$ around $0$. Supposing the map $f_0$ is chosen with probability $p_0$, we see that when $p_0=0$ (absence of perturbation), the $EI=1$, since $0$ is not a fixed point for $f_1$, but it jumps to $EI\le 1- 0.5\ p_0$ as soon as $p_0>0$ and regardless of the value of $b$. This  behavior is specular of what happened for the additive  noise described above: there it was enough to switch on the noise, no matter of its magnitude, to make the $EI$ equal to one. Here, the $EI$ changes in term of the probability of appearance of the perturbed map, no matter of its topological distance from the unperturbed one, the value of $|b|$ in this case. However, it is easy to make the $EI$ depending on the {\em distance} between the maps. For instance, let us take
 $f_0$ as $f_0=(2+j)x- \mathrm{mod} 1$, $j\in \mathbb{N}$, which could be seen as a strong perturbation.  By repeating the argument above we get  that the $EI\le 1-p_0\ \frac{1}{2+j}$.
\end{remark}
%
%
%
%
%
%
%
%
The preceding  computation of $q_0$ is valid for all values of $b \in (0,1)$. The question we may ask now is whether other $q_k$ are non zero. By a similar reasoning as for the computation of $q_0$, this problem is equivalent to ask for the existence of a concatenation $f_{\xi_{k+1}} \circ \cdots \circ f_{\xi_{1}}$ returning the point $0$ to itself after $k+1$ iterations and not before. Indeed, in that case we have that

 \begin{equation}
 q_k \ge \lim_{n \rightarrow \infty}\frac{\mathbb{G} (\xi_1,\dots,\xi_{k+1}) \rho_s(H_{n}(0)\cap (f^{-1}_{\xi_1}H_{n}(0))^c\cap \cdots \cap (f^{-k}_{\xi_k}H_{n}(0))^c \cap f^{-k-1}_{\xi_{k+1}}H_{n}(0))}{\rho_s(H_{n}(0))}= \mathbb{G} (\xi_1,\dots,\xi_{k+1}) )/2^k.
\end{equation}

 If the random orbit will not return to $0$, a standard argument gives the value $0$ for all $q_k$ when $k \neq 0$.

 To investigate the existence of such a concatenation, let us distinguish between the cases when $b$ is irrational or rational.

 If $b$ is irrational and $\xi_1 \neq 0$, any concatenation of any length maps $0$ into an irrational point, and so the orbit cannot come back to $0$ after having left it. For this reason, all the $q_k$ but $q_0$ are equal to $0$.

We investigate the case where $0<b=p/q<1$ is rational, where $p,q \in \mathbb{N}$ are mutually prime.


  We start by showing by induction that for $n\ge1$ and for all $i\in [2^{n}, 2^{n+1}-1]$, the random orbit starting from $p/q$ can attain the points $\frac{ip}{q}-\mathrm{mod}1$ (and only them) in exactly $n$ steps.

  This proposition is true at rank $n=1$, since $f_0(p/q)=2p/q$,   $f_1(p/q)=3p/q-\mathrm{mod}1$. Suppose now that it holds at rank $n$. Then we have that

\begin{itemize}

\item $f_0(2^{n}p/q-mod1)=2^{n+1}p/q-\mathrm{mod}1$\\
\item $f_1(2^{n}p/q-mod1)=(2^{n+1}+1)p/q-\mathrm{mod}1$\\
  \vdots\\
\item$f_0((2^{n+1}-1)p/q-mod1)=(2^{n+2}-2)p/q-\mathrm{mod}1$\\
\item$f_1((2^{n+1}-1)p/q-mod1)=(2^{n+2}-1)p/q-\mathrm{mod}1,$
  \end{itemize}
 and the proposition is true at rank $n+1$. This result implies that any point of the form $jp/q-\mathrm{mod}1$, with $j\in \mathbb{N}, j \ge 2$ can be attained by the random orbit starting from the point $p/q$.

 Let us consider $f_{\xi_{m}} \circ \cdots \circ f_{\xi_{2}} \circ f_{1}$ the concatenation of minimal length $m\ge2$ such that $f_{\xi_{m}} \circ \cdots \circ f_{\xi_{2}} \circ f_{1}(0)=qp/q-\mathrm{mod}1=0$. Note that the first map involved in this concatenation is $f_1$, so that the first iterate gives $p/q$. By the preceding argument, there exists such a concatenation and as we consider the smallest of the concatenations having this property, the random orbit leaving $0$ does not come back to $0$ before $m$ iterations. As discussed earlier, this implies a non zero value of $q_{m-1}$.

 In fact, there exists infinitely many non zero $q_k$: consider the concatenation of minimal length $m' \ge 2,$ $f_{\xi'_{m'}} \circ \cdots\circ f_{\xi'_2} \circ f_{1}$ such that $f_{\xi'_{m'}} \circ \cdots\circ f_{\xi'_2} \circ f_{1}(0)=(q+1)p/q-\mathrm{mod}1=p/q-\mathrm{mod}1$. By the argument in the induction, we have that either $m' = m$ or $m'=m+1$. Then we see that
 $$f_{\xi_{m}} \circ \cdots \circ f_{\xi_2} \circ f_{\xi'_{m'}} \circ \cdots \circ f_{\xi'_2}\circ f_{1}(0) =f_{\xi_{m}} \circ \cdots \circ f_{\xi_2}(p/q)=0.$$
 This proves the existence of a concatenation returning the point $0$ to itself after exactly either $2m$ or $2m+1$ iterations (and not before). Again we have proved that either $q_{2m}$ or $q_{2m-1}$ is non zero. Applying the same reasoning, we can prove that infinitely many $q_k$ have positive values.


 An interesting situation happens when $b=1/2$. In this case, for all $k$, $0$ can come back to itself after exactly $k+1$ iterations, and this happens only for the concatenation $f_1 \circ \cdots \circ f_1 \circ f_0$, where the map $f_1$ is applied succesively $k$ times. This sequence having probability $(1-p)^kp=1/2^{k+1}$, it is easily seen (by the very definition of $q_k$ in Eq. (\ref{QK}) and analogously to the proof for the computation of $q_0$), that  the $q_k$ are non zero and are equal to:

    $$q_k=\frac1{2^{k+1}}\lim_{n \rightarrow \infty}\frac{ \rho_s(H_{n}(0)\cap (f^{-1}_{1}H_{n}(0))^c\cap ... \cap  (f^{-k}_{1}H_{n}(0))^c \cap f^{-k-1}_{0}H_{n}(0))}{\rho_s(H_{n}(0))} = \frac1{4^{k+1}}.$$

In this situation, $\theta$ can be computed analytically and we find:

$$\theta=1-\sum_{k=0}^{\infty}(\frac14)^{k+1}=2/3.$$

We computed in table \ref{tableq} some estimates $\hat q_k$ and we find a very good agreement between theoretical and numerical estimates.

\begin{center}
\begin{tabular}{|c|c|c|c|c|c|}

  \hline
     &  Theoretical values  & Estimate $\hat q_k$ & Uncertainty  \\
  \hline
$q_0$ & 0.25 & 0.2500 & 0.0020\\
$q_1$ & 0.0625 & 0.0624 &  0.0016\\
$q_2$ & 0.015625  & 0.0156 &   $7.9.10^{-4}$ \\
$q_3$ &  0.00390625 &  0.0039 &   $3.94.10^{-4}$\\
$q_4$ & $9.765625.10^{-4}$ & $9.321.10^{-4}$ & $1.69.10^{-4}$\\
  \hline

\end{tabular}\\

\captionof{table} {Estimations of the $q_k$ using $\hat q_k$ for the precedent example with  $b=1/2$. These results compare favorably with theoretical values to a precision of the order of $10^{-4}$. Results are averaged over 20 trajectories of $5.10^6$ points and we took the 0.999-quantile of the observable distribution as a threshold. The uncertainty is the standard deviation of the results.}
 \label{tableq}

\end{center}

\subsection{Moving target}\label{MMTT}
Another source of disturbance comes from the uncertainty of fixing the target ball for the hitting times of the orbit. A more general theory of random transformations and moving balls will be presented elsewhere. Here we simply consider the case of a deterministic dynamics and a  target set shifted by a random displacement and we  assume an annealed approach.

\subsubsection{Discrete noise}
Let us take a point $z\in X$; then we consider  $\sigma$  the one-sided shift on $m$ symbols and set $\Omega=\{1,\cdots,m\}^{\mathbb{Z}}$. To each symbols,  $j, j=1, \dots, m,$  there corresponds a $z(j)\in [z-\epsilon, z+\epsilon]$.
As a probability $\mathbb{G}(\omega), \omega=(\omega_{0}, \omega_{1}, \dots)\in \Omega$, we put the  Bernoulli measure of weights $p_i=\frac1m, i=1,\dots,m$. It is invariant under the shift. Note that $\epsilon$ is fixed and is the strength of the  uncertainty.
At each temporal step  the map $T$ moves $x$ and the target point moves as well around $z$  under the action of $\sigma$. We now construct the direct product $S(x, \omega):=(T(x), \sigma\omega)$. The probability measure
 $\mathbb{P}:=\mu \times \mathbb{G}$ is invariant under $S$. As in section 2 we are interested in  the distribution of the random variable
\begin{equation}\label{HH}
H^r_n(x):=\text{\{First time the iterate $T^n(x)$ enters the ball $B(z(\sigma^n \omega)', e^{-u_n})$\}},
\end{equation}
where $\omega'=\omega_0$ is the first coordinate  of $\omega$. We define  the {\em usual} observable:
$$
\phi(x, \omega)= -\log |x-z(\omega')|=-\log |x-z(\omega_0)|,
$$
and notice that for all $k\ge 1:$
$$\phi(S^k(x,\omega))=-\log |T^k(x)-z((\sigma^k\omega)')|=-\log |T^k(x)-z(\omega_k)|.$$
The distribution of $H^r_n(x)$ is given by $\mathbb{P}\left(M_n\le u_n\right),$ where

$$M_n(x, \omega)=\max\{\phi(x, \omega'), \phi(S(x, \omega)), \dots, \phi(S^{n-1}(x, \omega))\},$$

and the boundary levels $u_n$ verify
$$
n \mathbb{P}(\phi(x, \omega)>u_n)=n \int \int d\mathbb{G}(\omega) \mu(B(z(\omega'), e^{-u_n}) \rightarrow \tau
$$
for some positive number $\tau$. We now choose a map $T$ on the unit interval admitting a spectral gap for the transfer operator and preserving an absolutely continuous invariant measure $\mu$. That operator acts, for instance, on the space of bounded variation function on $[0,1]$,  which can be enlarged to a Banach space by adding to the total variation, $|\cdot|_{TV}$,   the $L^1$ norm with respect to the Lebesgue measure $m$, $||\cdot ||_{L^1(m)}$.
We endow the space of  functions $g(x, \omega)$ defined on $X\times \Omega$ with the Banach norm defined by:
$$||g||_{\mathcal{B}}:=\int |g(\cdot, \omega)|_{TV} d\mathbb{G}(\omega)+||g||_{L^1(\mathbb{G}\times m)}.$$
One can show that with respect to this norm the transfer operator $\mathcal{L}_S$ for the direct product $S$  has a spectral gap on the largest eigenvalue $1$. We introduce now the perturbed operator $\tilde{\mathcal{L}}_S(g(x,\omega)=\mathcal{L}_S(g(x,\omega){\bf 1}_{B^c(z(\omega'), e^{-u_n})}(x))$, where $g\in \mathcal{B}$ is a function defined on the Banach space just introduced. We first notice that
$$
\mathbb{P}\left(M_n\le u_n\right)=\int\int  \tilde{\mathcal{L}_S^n}(h)dmd\mathbb{G},
$$
where $h$ is the density of $\mu$, which, with our assumptions, is bounded from below by the constant $C$.
Since $||(\tilde{\mathcal{L}}_S-\mathcal{L}_S)h||_{L^1(\mathbb{G}\times m)}\le C e^{-u_n}||h||_{\mathcal{B}}$,  we can apply the perturbation theory mentioned in section 2, (see \cite{GK}, \cite{cml}, \cite{book} ch. 7), and show that $\mathbb{P}\left(M_n\le u_n\right)$ converges to
the Gumbel law $e^{-\theta \tau}$.   The extremal index $\theta$ is given by the adaptation to the actual case of Eq. (\ref{QK}), with the $q_0$ which  reads:
\begin{equation}\label{NQK}
q_0=  \lim_{n\rightarrow \infty} \frac{\int d\mathbb{G} \ \mu\left(B(z(\omega'),n)\cap T^{-1}B(z(\sigma^{-1}\omega)',n)\right)}{\int d\mathbb{G}\  \mu(B(z(\omega'),n)}.
\end{equation}
We now give a simple example for which $q_0>0$, which implies that the extremal index is strictly less than $1$. Take as $T$ the map $T(x)=2x -\text{mod} 1$, and an alphabet of $4$ letters: $\{0,1,2,3\}$  with equal weights $1/4$. Moreover $\mu$ is the Lebesgue measure. Then  we set the associations:
\begin{itemize}
\item $0\rightarrow z_0$
\item $1\rightarrow z_1$
\item $2\rightarrow z_2$
\item $3\rightarrow z_3$,
\end{itemize}
where $z_i, i=0,1,2,3$ are points in the unit interval verifying the following assumptions:
\begin{itemize}
\item $T(z_1)=T(z_2)=z_0; \ T(z_0)=z_3.$
\item $T(z_3)\neq z_i, i=0,1,2,3.$

\end{itemize}
The numerator of $q_0$ is:
$$\int_{[\omega_0=1,
\omega_{-1}=0]}\mu\left(B(z_1,n)\cap T^{-1}B(z_0,n)\right)d\mathbb{G}+
$$

$$
\int_{[\omega_0=2,
\omega_{-1}=0]}\mu\left(B(z_2,n)\cap T^{-1}B(z_0,n)\right)d\mathbb{G}+
\int_{[\omega_0=0,
    \omega_{-1}=3]}\mu\left(B(z_0,n)\cap T^{-1}B(z_3,n)\right)d\mathbb{G}+$$

$$[\text{sum over all the other cylinders of length $2$}],$$
where $[\omega_0,\omega_{-1}]$
denotes a cylinder with fixed coordinates $\omega_0$ and $\omega_{-1}$.

For the denominator we get the value:
$$
\int d\mathbb{G} \mu(B(z(\omega'),n)=\sum_{l=1}^n \int_{[\omega_0=l]}d\mathbb{G} \mu(B(z(l),n)=\sum_{l=1}^4 p_l\ \mu(B(z(l),n),
$$
By using the same arguments as in Eq. (\ref{QCC}), we see that for all the cylinders in the numerator different from the three explicitly given  above, the integrals give zero in the limit of large $n$. For the first of the three cases above (the others being similar),  we get
$$\int_{[\omega_0=1,
\omega_{-1}=0]}\mu\left(B(z_1,n)\cap T^{-1}B(z_0,n)\right)d\mathbb{G}=\frac12\int_{[\omega_0=1,
    \omega_{-1}=0]}\mu\left(B(z_0,n)\right)d\mathbb{G}=\frac12 (\frac14)^2\mu\left(B(z_0,n)\right).$$

The numerator therefore contributes with $\frac32 (\frac14)^2\mu\left(B(z_0,n)\right)$.
The denominator is (by the Lebesgue translation invariance): $\mu\left(B(z_0,n)\right)$. Therefore we get $q_0= \frac32 (\frac14)^2=0.09375.$

To get $q_1$, we need to compute the following quantity:

\begin{equation}
q_1=\lim_{n\rightarrow \infty} \frac{\int d\mathbb{G}\  \mu\left(B(z(\omega'),n)\cap T^{-1}B(z(\sigma^{-1}\omega)',n)^c \cap T^{-2}B(z(\sigma^{-2}\omega)',n)\right)}{\int d\mathbb{G}\  \mu(B(z(\omega'),n)}.
\end{equation}

We start, as before, by summing the integral in the numerator over all the possible cylinders of length 3. Among  those cylinders, it is easy to check that only 6 of them contribute to the mass, namely $([3,1,1],[3,2,1],[3,3,1],[3,1,2],[3,2,2]$ and $[3,3,2])$. For the integral associated to the first cylinder, we have  (the five others are similar):

$$\int_{[3,1,1]} d\mathbb{G}\ \mu\left(B(z(\omega'),n)\cap T^{-1}B(z(\sigma^{-1}\omega)',n)^c \cap T^{-2}B(z(\sigma^{-2}\omega)',n)\right)=(\frac12)^2\int_{[3,1,1]} d\mathbb{G} \ \mu(B(z_3,n)$$

$$=(\frac14)^3(\frac12)^2 \mu(B(z_3,n))$$

By summing over the six cylinders and dividing by the denominator, we get the result:

$$q_1=6(\frac14)^3(\frac12)^2.$$

We tested this result numerically with different sets of target points following the aforementioned assumptions. We find good agreements of $\hat q_0$ with the theoretical value of $q_0=  0.09375$ and of $\hat q_1$ with the theoretical value of $q_1=0.0234375$ with a precision of order $10^{-3}$ (see table \ref{table}).

In the case where none of the points $z_i$ is the $k^{th}$ iterate of another (in particular none of the points $z_i$ is $k$-periodic), it can be shown that $q_{k-1}$ is equal to $0$ for $k>2$. This is indeed what we find in our numerical simulations, which were performed up to the order $5$.

\begin{center}
\begin{tabular}{|c|c|c|c|c|}
  \hline
  $z_0$ &  $\hat\theta_5$ & uncertainty & $\hat q_0$ & $\hat q_1$  \\
  \hline
  2/11 & 0.883 & 0.0017 & 0.0937 & 0.0231 \\
  10/13 & 0.883 & 0.0016 & 0.0936 & 0.0234 \\
  $1/\pi$ & 0.883 & 0.0022 & 0.0933 & 0.0234\\
  \hline

\end{tabular}\\

\captionof{table}{5-order estimates of $\theta$ found for different $z_0$ (the other points $z_1$,$z_2$ and $z_3$ are computed to satisfy the assumptions of the presented example). We used as a threshold the $0.999$-quantile of the observable distribution and the uncertainty is the standard deviation of the results. The values for $\hat q_0$ and $\hat q_1$ are averaged over the $20$ trajectories and match with the theoretical results. All the other $\hat q_k$ computed are equal to $0.$}
 \label{table}
\end{center}

This example is in some sense atypical since the four points $z_0,\dots,z_3$ could be far away from each other. For instance,  $z_1$ and $z_2$ are the two predecessors of $z_0$ and therefore they are on the opposite sides of $\frac12$.
By constraining the points to be in a small neighborhood of a given {\em privileged} center, the previous effects should be absent and the EI should be one, or very close to it. However our example shows that in the presence of moving target, it is not the periodicity which makes the EI eventually less than one. This gives another concrete example where the series in the spectral formula has at least two terms different from zero.

\subsubsection{Continuous noise}\label{CCNN}

We claimed above that when the center of the  target ball can take any value in a small neighborhood of  a given  point $z_0$, the EI collapses to $1$, even for periodic points $z_0$. To model this more physically realistic situation, let us fix $\epsilon>0$ and consider the set $Z_\epsilon=[z_0-\epsilon,z_0+\epsilon]$. We define a map $f$ acting on $Z_\epsilon$ with an associated invariant probability measure $\nu$ that drives the dynamics of the target point $z\in Z_\epsilon$. We  suppose that $\nu$ is not atomic.  The observable considered is now $\phi(x,z)=-\log |x-z|$ on the product space $\{X\times Z_\epsilon, \mu \times \nu\}$.  By similar arguments to the ones we described for the discrete perturbation of the target point, we can show  the existence of an extreme value law for the process $\phi \circ (T^k, f^k)$, with an EI  given by Eq. (\ref{QK}), with
\begin{equation}\label{proofq}
q_k=\lim_{n\rightarrow \infty} \frac{\int_{Z_\epsilon} d\nu(z)  \mu\left(B(z,n)\cap T^{-1}B(f(z),n)^c \cdots \cap T^{-k-1}B(f^{k+1}(z),n)\right)}{\int_{Z_\epsilon} d\nu(z)  \mu(B(z,n)},
\end{equation}
where $B(y,n)$ denotes a ball around $y$ of radius $e^{-u_n}$.
We  have:
\begin{proposition}\label{theo}
Suppose that for all $k\in\mathbb{N}$, $\nu(\{z\in Z_\epsilon |T^{k+1}(z)=f^{k+1}(z) \})=0$ and that $\mu$ is absolutely continuous with respect to Lebesgue with a bounded density $h$ such that $h\ge\iota>0$. Then the extremal index is $1$.
\end{proposition}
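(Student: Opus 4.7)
The plan is to show that every coefficient $q_k$ appearing in formula (\ref{proofq}) vanishes; invoking the generalized O'Brien formula of Proposition \ref{propo} then yields $\theta = 1 - \sum_{k \ge 0} q_k = 1$. Fix $k \ge 0$ and write $r_n := e^{-u_n}$. Dropping the intermediate complement events only enlarges the set, so the integrand in the numerator of (\ref{proofq}) is bounded above by
$$
N_z(n) := \mu\!\left(B(z,r_n)\cap T^{-(k+1)}B(f^{k+1}(z),r_n)\right),
$$
and it is enough to prove that $\int N_z(n)\,d\nu(z)$ is $o$ of the denominator as $n \to \infty$.

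First I would prove pointwise vanishing: for $\nu$-a.e. $z \in Z_\epsilon$ one has $N_z(n)=0$ for all $n$ large enough (depending on $z$). Indeed, by the first hypothesis the set $\{z:T^{k+1}(z)=f^{k+1}(z)\}$ is $\nu$-null, and in the piecewise-smooth setting of Section \ref{MMTT} the map $T^{k+1}$ has only countably many discontinuity points, which form a $\nu$-null set because $\nu$ is non-atomic. For a $z$ in the full-measure complement, set $\delta:=|T^{k+1}(z)-f^{k+1}(z)|>0$ and choose $\eta>0$ from the continuity of $T^{k+1}$ at $z$ so that $|x-z|<\eta$ implies $|T^{k+1}(x)-T^{k+1}(z)|<\delta/2$. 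Once $r_n<\min(\eta,\delta/2)$ the triangle inequality forces any $x\in B(z,r_n)$ to satisfy $|T^{k+1}(x)-f^{k+1}(z)|\ge \delta/2 > r_n$, so the intersection defining $N_z(n)$ is empty.

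To conclude, use the density bounds $2\iota\,r_n \le \mu(B(z,r_n)) \le 2\|h\|_\infty\,r_n$. Dividing both the numerator and the denominator of (\ref{proofq}) by $r_n$ produces
$$
\frac{\int_{Z_\epsilon} N_z(n)/r_n\,d\nu(z)}{\int_{Z_\epsilon} \mu(B(z,r_n))/r_n\,d\nu(z)},
$$
whose integrand in the numerator is uniformly bounded by $2\|h\|_\infty$ and tends to $0$ $\nu$-a.e.\ by the previous step, hence the numerator vanishes by dominated convergence; the denominator, on the other hand, is bounded below by $2\iota\,\nu(Z_\epsilon)>0$. Therefore $q_k=0$ for every $k\ge 0$, and Proposition \ref{propo} finishes the argument.

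The main obstacle I anticipate is purely a matter of regularity bookkeeping: the hypotheses of the statement do not explicitly mention continuity of $T^{k+1}$, yet the pointwise argument uses it outside a $\nu$-null set. In the spectral framework of Section \ref{MMTT} this is automatic, as $T$ is piecewise smooth on the unit interval; if one wanted to avoid even this implicit hypothesis, the continuity step can be replaced by writing $T^{-(k+1)}B(f^{k+1}(z),r_n)$ as a disjoint union of preimage branches, estimating $\mu$ on each branch through the upper bound on $h$, and observing that for $\nu$-a.e.\ $z$ none of the branches meets $B(z,r_n)$ once $n$ is large, producing the same conclusion $N_z(n)/r_n\to 0$ a.e.
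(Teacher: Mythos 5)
Your proof is correct and follows essentially the same route as the paper: both arguments reduce to showing each $q_k$ in Eq.~(\ref{proofq}) vanishes by observing that for $\nu$-a.e.\ $z$ the intersection $B(z,r_n)\cap T^{-(k+1)}B(f^{k+1}(z),r_n)$ is eventually empty, and then controlling the exceptional set via the two-sided bounds on the density $h$. The paper packages this as a split of $Z_\epsilon$ into a good set $Z_1^{n,k}$ and a bad set $Z_2^{n,k}$ with $\nu(Z_2^{n,k})\to\nu(\{T^{k+1}=f^{k+1}\})=0$, while you phrase the same estimate as pointwise a.e.\ vanishing plus dominated convergence and are somewhat more explicit about the (implicitly needed) continuity of $T^{k+1}$ off a countable set; the two formulations are interchangeable.
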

\begin{proof}
  We compute the term $q_k$ given by Eq. (\ref{proofq}) and denote
  $$Z_1^{n,k}=\{z\in Z_\epsilon | \text{dist}(z, T^{-k-1}B(f^{k+1}(z),n))>r_n\},$$

where $r_n=e^{-u_n}$. We can write the numerator in Eq. (\ref{proofq}) as a sum of integrals over $Z_1^{n,k}$ and its complementary set  $Z_2^{n,k}=Z_\epsilon \backslash Z_1^{n,k}$.

Since for $z \in Z_1^{n,k}$, $z$ and $z^*$ are at a distance larger than $r_n,$ for all $z^*\in T^{-k-1}B(f^{k+1}(z),n)$, we have that $B(z,n)\cap T^{-k-1}B(f^{k+1}(z),n)=\emptyset$, and the integral over $Z_1^{n,k}$ is zero for all $n$.

It remains now to treat the integral over $Z_2^{n,k}$. We have that:
\begin{equation}
\begin{aligned}
q_k &\leq \lim_{n\rightarrow \infty} \frac{\int_{Z_2^{n,k}} d\nu(z) \mu(B(z,n))}{\int_{Z_\epsilon} d\nu(z)\mu(B(z,n))}
      &\leq \lim_{n\to\infty} \nu (Z_2^{n,k}) \frac{\sup_{x\in M} \mu(B(x,n))}{\inf_{x\in M} \mu(B(x,n))}\\
      &= \lim_{n\to\infty} \nu (Z_2^{n,k}) \frac{\sup_{x\in M}h(x)}{\inf_{x\in M} h(x)}.
\end{aligned}
\end{equation}
With our assumptions on $h$, the fraction in the last term is finite. Let us consider the limit set $Z_2^k=\cap_n
 Z_2^{n,k}=\{z\in Z_\epsilon |T^{k+1}(z)=f^{k+1}(z) \}$. We have that $\lim_{n\to\infty} \nu (Z_2^{n,k})=\nu(Z_2^{k})=0$, by hypothesis and so $q_k=0$ for all $k$ and therefore $\theta=1.$
\end{proof}

\begin{remark}
In practice, the requirement that $\nu(\{z\in Z_\epsilon |T^{k+1}(z)=f^{k+1}(z)\})=0$ for all $k$ is satisfied for a large class of situations. We take two maps $T$ and $f$ intersecting at a countable number of points $Z_2^1$. We suppose that the preimage of any point by the applications $T$ and $f$ is a countable set. Then the set $Z_2^k=T^{-k-1}Z_2^1 \cup f^{-k-1}Z_2^1$ is  countable as well   for all $k\in \mathbb{N}$. Since $\nu$ is non-atomic, we have that  $\nu(\{z\in Z_\epsilon |T^{k+1}(z)=f^{k+1}(z)\})=0$ is verified and the proposition holds.
\end{remark}

\subsubsection{Observational noise}
 Instead of considering a perturbation on the target set driven by the map $f$, we consider the center of the target ball as  a random variable uniformly distributed in the neighborhood of size $\epsilon$ of the point $z_0$.  This situation  models  data with uncertainty or disturbances in their detection, and is equivalent to the case  {\em observational noise} which we considered in \cite{obs} in the framework of extreme value theory. In this approach, the dynamics of a point $x$ is given by $T^kx+\xi_k$, where $\xi_k$ is a random variable uniformly distributed in a ball centered at $0$ of radius $\epsilon$. The process  is therefore  given by $-\log(dist(T^kx+\xi_k,z_0))$. When this is transposed in the  moving target case,  it becomes $-\log(dist(T^kx,z_0+\xi_k))$ and the two approaches define the same process, which give the same stationary distribution. In \cite{obs} we proved rigorously in some cases and numerically in others, that for a large class of chaotic maps, an extreme value law holds with an extremal index equal to $1$. By equivalence of the two processes, an extreme value law also holds in the moving target scenario and the EI is equal to $1$.

 We tested numerically the scenario of a target point following a uniform distribution in an interval of length $\epsilon$ centered in a point $z_0$, for several maps of the circle (Gauss map, $3x-\mathrm{mod}1$, rotation of the circle). For all of them, we find that for periodic $z_0$, the extremal index which is less than $1$ when unperturbed, converges to $1$ as the noise intensity $\epsilon$ increases (see an example for the $3x-\mathrm{mod}1$ map in figure \ref{periodic}-A). When $z_0$ is not periodic (we chose it at random on the circle), the EI is equal to $1$ for all $\epsilon$ (see again figure \ref{periodic}-B).

\begin{figure}[h!]
    \centering
    \begin{subfigure}[t]{0.5\textwidth}
        \centering
        \includegraphics[height=2.in]{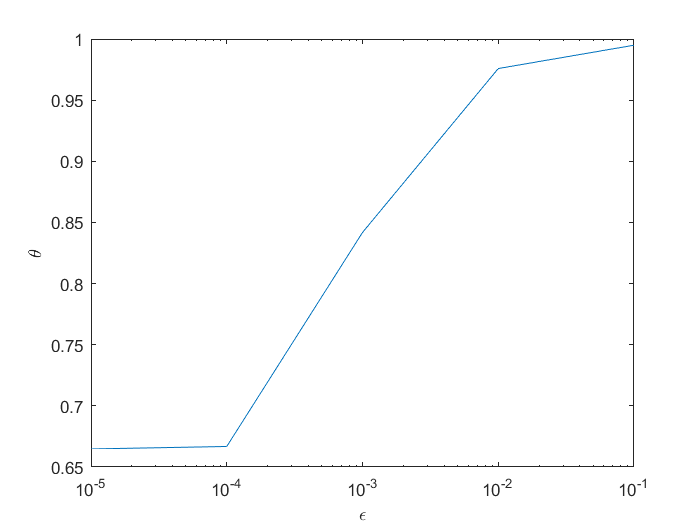}
        \caption{$z_0=1/2$ (fixed point)}
    \end{subfigure}%
    ~
    \begin{subfigure}[t]{0.5\textwidth}
        \centering
     \includegraphics[height=2in]{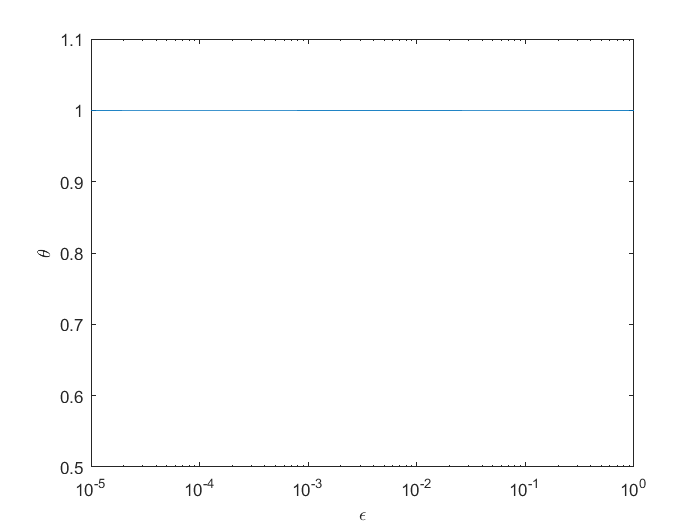}
         \caption{Generic $z_0$}
    \end{subfigure}
   \caption{Influence of the intensity of noise $\epsilon$ on the EI for the $3x-\mathrm{mod}1$ map perturbed by uniform noise. We simulated trajectories of $5.10^7$ points and took $p=0.999$. The computations are made using the estimate $\hat \theta_5$.}
   \label{periodic}
\end{figure}

\section{The dynamical extremal index}\label{SDEI}
Up to now the target set was a ball around a point. In the attempt to describe the synchronisation of coupled map lattices, Faranda et al. \cite{cml} introduced the neighborhood of the diagonal in the $n$-dimensional hypercube and defined accordingly the extremal index related to the first time the maps on the lattices become close together. Faranda et al. \cite{cml} then used that approach to define a new type of observable, first in  \cite{D2} and successively generalized in \cite{CFMVY}, which brought to a different interpretation of the extremal index, in terms of Lyapunov exponents instead of periodic behaviors.

Let us consider the $k-$fold ($k>1$) direct product $(X,\mu,T)^{\bigotimes k}$ with the direct product map $T_k=T\circ \cdots\circ T$ acting on the product space $X^k$ and the product measure $\mu_k=\mu \circ\cdots\circ \mu$.

 Let us now define the observable on $X^k$
\begin{equation}\label{OB}
\phi(x_1,x_2,\dots,x_k)=-\log(\max_{i=2,\dots,k}d(x_1,x_i)),
\end{equation}

where each $x_i \in X$. We also write $\overline{x}_k:=(x_1,x_2,\dots,x_k)$ and $T_k(\overline{x}_k)=(Tx_1,\dots,Tx_k)$.
As we explained in \cite{D2}, if we put
$M_n(\overline{x}_k)=\max \{\phi(\overline{x}_k),\dots,\phi(T^{n-1}_k(\overline{x}_k)\}$
and look at the distribution of the maximum $\mu_k(M_n \leq u_n)$,
this distribution is non-degenerate and converges to the Gumbel law for $n \to \infty$ $e^{-\theta_k \tau}$,
provided we can find a sequence $u_n \to \infty$ and verifying
$\mu_k(\phi>u_n) \to \frac{\tau}{n}$,
where $\tau$ is a positive number. The quantity $\theta_k$ is our new   extremal index and it will be described later on. Notice first that
\begin{equation}\label{GD}
\mu_k(\phi>u_n)=\int_{M^k} d\overline{x}_k \mathbf{1}_{B(x_1,e^{-u_n})}(\overline{x}_k)\dots\mathbf{1}_{B(x_1,e^{-u_n})}(\overline{x}_k)
                     =\int_M dx_1\mu (B(x_1,e^{-u_n}))^{k-1}.
\end{equation}

Our first motivation to investigate the observable in Eq. (\ref{OB}) was the fact that the integral on the right hand side of Eq. (\ref{GD}) scales like $e^{-u_n D_k(k-1)}$, where $D_k$ denotes the generalized dimension of order $k$ of the measure $\mu$.
We now describe $\theta_k$. We first define:
\begin{equation}\label{deltak}
\Delta_n^k=\{(\overline{x}_k), d(x_1,x_2)<e^{-u_n},\dots,d(x_1,x_k)<e^{-u_n}\}.
\end{equation}
By using the spectral technique in \cite{D2}, and the analytical results of \cite{cml},  it is possible to show that:
\begin{equation}\label{HHH}
\theta_k=1-\lim_{n \to \infty} \frac{\mu_k(\Delta_n^k \cap T_k^{-1}\Delta_n^k)}{\mu_k(\Delta_n^k)}.
\end{equation}
The quantity $\theta_k$ is the {\em dynamical extremal index} (DEI) appearing at the exponent of the Gumbel law.

For $C^2$ expanding maps of the interval, which preserve an absolutely continuous invariant measure $\mu=hdx$ with strictly positive density $h$ of bounded variation, it is possible to compute the right hand side  of Eq. (\ref{HHH}) and get:
\begin{multline}\label{20a}
\mu_k(\Delta_n^k \cap T_k^{-1}\Delta_n^k)=
\int dx_1h(x_1) \int dx_2h(x_2)\ \chi_{B(x_1,e^{-u_n)})}(x_2)\chi_{B(Tx_1,e^{-u_n)})}(Tx_2) \cdots \\
\cdots \int dx_k h(x_k)\chi_{B(x_1,e^{-u_n)})}(x_k)\chi_{B(Tx_1,e^{-u_n)})}(Tx_k).
\end{multline}
All the $k-1$ integrals above factorize, and depend on the parameter $x_1$. Therefore they can be treated as in the proof of Proposition 5.5  in \cite{cml}, yielding the rigorous result:
\begin{proposition}\label{prop-20}
Suppose that: the map $T$ belongs to $C^2$; it preserves an absolutely continuous invariant measure $\mu=hdx$, with strictly positive density $h$ of bounded variation; it verifies conditions $P1-P5$ and $P8$ in \cite{cml}\footnote{These conditions essentially ensure that the transfer operator associated with the map $T$ has a spectral gap and that the density $h$ has finite oscillation in the neighborhood of the diagonal.}. Then
\begin{equation}\label{EEII}
\theta_k=1-\frac{\int\frac{h(x)^k}{\mid DT(x)\mid^{k-1}}dx}{\int h(x)^kdx}.
\end{equation}
\end{proposition}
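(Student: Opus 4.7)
The starting point is the factorization already recorded in Eq. (\ref{20a}). The $k-1$ integrals over $x_2,\dots,x_k$ in that formula are all identical and depend only on $x_1$, so we may rewrite
\begin{equation*}
\mu_k(\Delta_n^k \cap T_k^{-1}\Delta_n^k) = \int h(x_1)\,\bigl[J_n(x_1)\bigr]^{k-1}\,dx_1,
\end{equation*}
where $J_n(x_1):=\mu\bigl(B(x_1,r_n)\cap T^{-1}B(Tx_1,r_n)\bigr)$ and $r_n=e^{-u_n}$. Applying the same factorization to $\mu_k(\Delta_n^k)$ via Eq. (\ref{GD}) gives
\begin{equation*}
\mu_k(\Delta_n^k) = \int h(x_1)\,[\mu(B(x_1,r_n))]^{k-1}\,dx_1.
\end{equation*}
So the whole problem reduces to comparing, uniformly in $x_1$ up to a negligible exceptional set, the two scalar quantities $J_n(x_1)$ and $\mu(B(x_1,r_n))$, raised to the power $k-1$.

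\textbf{Inner asymptotics.} For the plain ball, the boundedness and strict positivity of $h$, together with its bounded variation, yield $\mu(B(x_1,r_n))=2r_n\,h(x_1)\,(1+o(1))$ as $n\to\infty$, essentially by Lebesgue differentiation combined with BV control at continuity points of $h$ (a full-measure set). For the intersection, use that $T$ is $C^2$ and expanding, $|DT|\ge\lambda>1$: on $B(x_1,r_n)$ the derivative $DT$ stays close to $DT(x_1)$ and $T$ is injective there once $r_n$ is small enough, so
\begin{equation*}
B(x_1,r_n)\cap T^{-1}B(Tx_1,r_n)=\Bigl\{y:|y-x_1|<\tfrac{r_n}{|DT(x_1)|}(1+o(1))\Bigr\},
\end{equation*}
an interval of Lebesgue length $\frac{2r_n}{|DT(x_1)|}(1+o(1))$. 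Integrating $h$ over it (and again using bounded oscillation of $h$ at scale $r_n$) yields
\begin{equation*}
J_n(x_1)=\frac{2r_n\,h(x_1)}{|DT(x_1)|}\,(1+o(1)).
\end{equation*}

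\textbf{Conclusion and main obstacle.} Inserting the two asymptotics, the common factor $(2r_n)^{k-1}$ cancels between numerator and denominator of the ratio defining $\theta_k$ in Eq. (\ref{HHH}), and passing to the limit yields
\begin{equation*}
\lim_{n\to\infty}\frac{\mu_k(\Delta_n^k\cap T_k^{-1}\Delta_n^k)}{\mu_k(\Delta_n^k)}=\frac{\int h(x)^k/|DT(x)|^{k-1}\,dx}{\int h(x)^k\,dx},
\end{equation*}
which is exactly the claimed formula for $\theta_k$. The delicate step is justifying the exchange of limit and integral over $x_1$: one needs the $o(1)$ corrections above to be uniform enough to apply dominated convergence, with dominant function coming from the uniform upper bound on $h$ and the uniform lower bound $|DT|\ge\lambda$. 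This is precisely where the hypotheses P1--P5 and P8 of \cite{cml} enter — they furnish the spectral gap of the transfer operator and, crucially via P8, the finite oscillation of $h$ inside tubular neighborhoods of the diagonal that is needed to treat each of the $k-1$ factors simultaneously. Once those conditions are assumed, the rigorous argument reduces to the one used in the proof of Proposition 5.5 of \cite{cml}, applied factor by factor to the $k-1$ inner integrals.
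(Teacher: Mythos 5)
Your proposal is correct and follows essentially the same route as the paper: the paper's own argument consists precisely of the factorized expression in Eq. (\ref{20a}), the observation that the $k-1$ inner integrals are identical functions of $x_1$, and a deferral of the ball-versus-preimage asymptotics and the limit exchange to Proposition 5.5 of \cite{cml}. You have simply filled in the content of that deferred step (the $2r_n h(x_1)$ and $2r_n h(x_1)/|DT(x_1)|$ asymptotics and the dominated-convergence justification via P1--P5 and P8), which matches the intended argument.
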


This formula uses the translational invariance of the Lebesgue measure: we refer to sections II-B and II-C in \cite{D2} for analogous extensions to more general invariant measures and to SRB measures for attractors. As remarked in \cite{D2}, whenever the density does not vary too much, or alternatively the derivative (or the determinant of the Jacobian in higher dimensions) are almost constant, we expect a scaling of the kind: $\theta_k \sim 1-e^{-(k-1)h_m},$
where $h_m$ is the metric entropy (the sum of the positive Lyapunov exponents).

In section D of \cite{D2} and for the case $k=2$, we replaced the iterations of a single map with concatenations of i.i.d. maps chosen with additive noise, see above, section 4.1. Although the dynamical extremal index is related to the Lyapunov exponents, it is also influenced by the fact that the set $\Delta=\cup_{i=2}^k \{x_i=x_1\}$ is invariant in the deterministic situation. By looking at Eq. (\ref{HHH}), we
see that we estimate the proportion of the neighborhood of
the invariant set $\Delta$ returning to itself. As argued in \cite{D2}, that
estimate gives information on the rate of backward volume
contraction in the unstable direction. Since the noise generally destructs these invariants sets, we expect the extremal
index be equal to $1$ or quickly approaching $1$ for all $k$ when the noise increases. The situation depends again on the characteristics of the noise.

\subsection{Additive noise} \label{AD} The prediction at the end of the preceding section is confirmed by concatenating maps perturbed with additive noise and smooth probability density function (see the numerical computations shown in figure \ref{thetaq}). As in section \ref{DN} the perturbation is of annealed type. For each map, trajectories of $10^7$ points were simulated and the $0.999-$quantile of the distribution of the observable $\phi$ was selected as a threshold. As only the $q_0$ term is non-zero, we choose the S\"uveges estimate for our computations, to be assured that no error coming from $\hat q_k$ terms of higher order are added.

\begin{figure}[h!]
    \centering
    \begin{subfigure}[t]{0.5\textwidth}
        \centering
        \includegraphics[height=2.in]{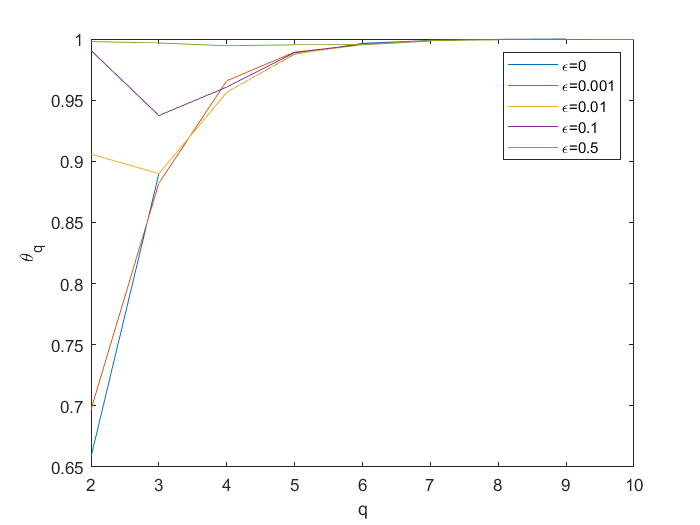}
        \caption{$3x-mod1$}
    \end{subfigure}%
    ~
    \begin{subfigure}[t]{0.5\textwidth}
        \centering
     \includegraphics[height=2in]{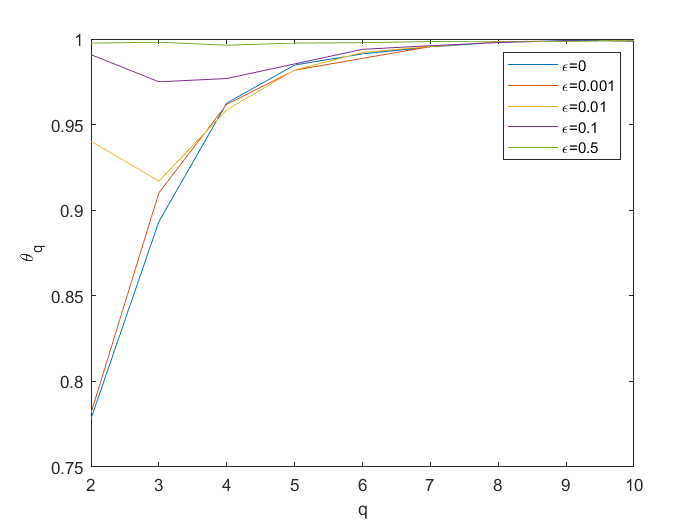}
         \caption{$\frac1x-mod1$ (Gauss map)}
    \end{subfigure}
   \caption{Influence of the intensity of noise $\epsilon$ on the $\theta_k$ spectrum for maps of the circle perturbed by uniform noise. We used the S\"uveges estimate and the 0.999-quantile of the observable distribution as a threshold, as mentioned in the text.}
   \label{thetaq}
\end{figure}

\subsection{Discrete noise}\label{DND}
A different scenario happens if we look at sequences of finitely maps chosen in an i.i.d. way according to a Bernoulli process as we did in section \ref{DN}. We choose now the  two maps $f_0(x)=3x-\mathrm{mod}1$ and $f_1(x)=3x+b-\mathrm{mod}1$ and we put the same distribution $\mathbb{G}$ defined in \ref{DN}. A combination of Eqs. (\ref{QCC}) and (\ref{HHH}) gives for the term $q_0$ entering the infinite sum defining the DEI:
\begin{equation}
q_0= \lim_{n \to \infty} \frac{\int_{\Omega}\rho_k (\Delta_n^k \cap f_{\xi^k}^{-1} \Delta_n^k)d\mathbb{G}^k(\xi^k)}{\rho_k(\Delta_n^k)},
\end{equation}
where: $\xi^k$ is the vector $\xi^k=(\xi_1, \dots, \xi_k)$, $f_{\xi^k}^{-1}=f_{\xi_1}^{-1}\cdots f_{\xi_k}^{-1}$ and $\rho_k$ is the stationary measure constructed as in \cite{AFV}. We now split the integral over the cylinders $\xi^{k,0}=[\xi_1=0, \dots, \xi_k=0], \xi^{k,1}=[\xi_1=1, \dots, \xi_k=1]$ and their complement. Therefore we have:
\begin{equation}\label{FT}
\frac{\int_{\Omega}\rho_k (\Delta_n^k \cap   f_{\xi^k}^{-1} \Delta_n^k)d\mathbb{G}^k(\xi)}{\rho_k(\Delta_n^k)}=\mathbb{G}(\xi^{k,0})
\frac{\rho_k (\Delta_n^k \cap f^{-n} _0 \Delta_n^k)}{\rho_k(\Delta_n^k)}+\mathbb{G}(\xi^{k,1})
\frac{\rho_k (\Delta_n^k \cap f^{-n}_1 \Delta_n^k)}{\rho_k(\Delta_n^k)}+
\end{equation}
\begin{equation}\label{ST}
\text{\{the integrals are computed over all the other cylinders of length $k$\}}.
\end{equation}

The two fractions in Eq. (\ref{FT}) are equal to the $q_0^{(\text{unp})}$ of the unperturbed systems, which are the same by the particular choices of the maps and by Eq. (\ref{EEII}).

All the other terms in Eq. (\ref{ST}) are zero, we now explain why.   Let us in fact  consider a vector $\tilde{\xi}^k$ different from $(0,0,\dots,0)$ and $(1,1,\dots,1)$.
Suppose that  the first coordinate $\tilde{\xi}^k_1$ of $\tilde{\xi}^k$ is zero and let $\tilde{\xi}^k_j\neq 0$ for some $1<j\le k$. If $x_k \in \Delta_n^k$, and since
$f_0 x_1=3x_1-\mathrm{mod}1$, $f_1 x_j=3x_j+b-\mathrm{mod}1$,  we have that $d(f_0 x_1  , f_1 x_i) \rightarrow_{n \to \infty} \min \{b,1-b\}$ as $d(x_1,x_i)<e^{-u_n}$.

 Therefore when $n$ is large enough, $\forall x_k \in \Delta_n^k$, $f^{\tilde{\xi}^k}x_k:= f^{\tilde{\xi}_k^k}\cdots f^{\tilde{\xi}_1^k} x_k\notin \Delta_n^k$ and $\Delta_n^k \cap f^{-1}_{\tilde{\xi}^k} \Delta_n^k = \emptyset$. In conclusion we have

\begin{equation}\label{ppp}
\theta_k=1-q_0^{(\text{unp})}(p^k+(1-p)^k),
\end{equation}
where $p$ is the weight of the Bernoulli measure. This last formula has been tested numerically with the maps $f_0$ and $f_1$, for different values of $b$ and with $p=0.5$.  Good agreement is found for different values of $b$ (see figure \ref{deiber}).
We use trajectories of $5.10^7$ points and the $0.995-$ quantile of the observable distribution as a threshold. Estimations are made using the S\"uveges estimate, for the same reasons as described earlier.

\begin{figure}[h!]
        \centering
        \includegraphics[height=2.in]{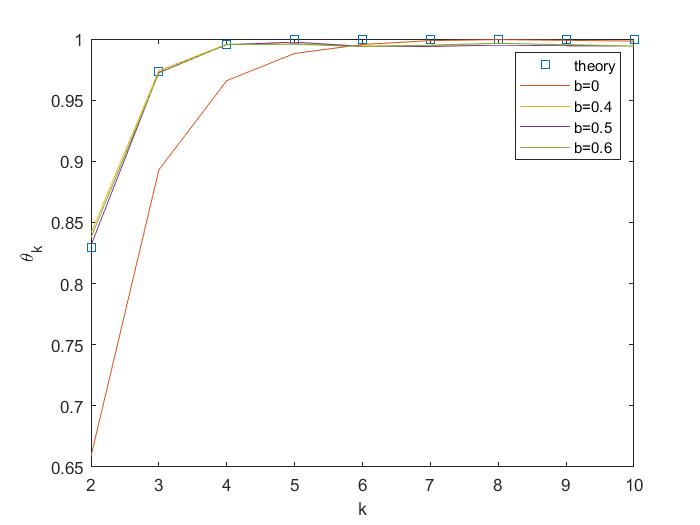}
        \caption{$\theta_k$ spectrum for the sequence of two maps described in the example, found using the S\"uveges estimate. The red curve is the value for $b=0$, and for different choices of $b\ne0$, the values of $\theta_k$ follow  the predictions of Eq. (\ref{ppp}) (blue squares).}
        \label{deiber}
    \end{figure}

\section{Point processes: statistics of persistence}
We said in section 2 that the extremal index is the inverse of the average cluster size when we look at exceedances  in the shrinking neighborhood of a given target point $z$. To be more precise let us start with the deterministic case.

\subsection{The deterministic case}
 We consider the following counting function
\begin{equation}\label{PP1}
N_n(t)=\sum_{l=1}^{\lfloor{\frac{t}{\mu(B(z,r_n))}}\rfloor}{\bf 1}_{B(z,r_n)}(T^l(x)),
\end{equation}
where the radius $r_n$ goes to $0$ when $n$ tends to infinity.
We are interested in the distribution
\begin{equation}\label{PP2}
\mu( N_n(t)=k), \ k\in \mathbb{N}.
\end{equation}
for $n\rightarrow \infty.$
It has been proved (see for instance \cite{HV, JF}) that when $z$ is not a periodic point, $\mu( N_n(t)=k)$ converge to the Poisson distribution $\frac{t^k e^{-t}}{k!}$, while for a periodic point of minimal period $p$ we have the Poly\`a-Aeppli distribution
\begin{equation}
 \mu (N_n(t)=k)\rightarrow e^{-\theta t}
\sum_{j=1}^k(1-\theta)^{k-j}\theta^j\frac{(\theta t)^j}{j!}\binom{k-1}{j-1},
\end{equation}
where $\theta$ is the extremal index.

We now invoke a very recent theory developed by \cite{HHVV}, which allows us to put the statistics of the number of visits in a wider context and to make explicit the connection with the EI, which is the main object of this note.

We first replace the ball $B(z, r_n)$ with a sequence of sets $B_n(\Gamma)$ whose intersection over $n\in \mathbb{N}$ gives the zero measure set $\Gamma$. For instance in section \ref{SDEI} such sets are the neighborhoods $\Delta^k_n$. We keep in mind this example for the next application. We then define the {\em cluster size distribution} $\pi_l$ as
\begin{equation}\label{CSD}
\pi_l:=\lim_{n \rightarrow \infty}\lim_{K\rightarrow \infty}\frac{\mu(\sum_{i=0}^{2K}{\bf 1}_{B_n(\Gamma)}\circ T^i=l)}{\mu(\sum_{i=0}^{2K}{\bf 1}_{B_n(\Gamma)}\circ T^i \ge1)}.
\end{equation}
Under a few  smoothness assumptions for the map $T$, which are verified for the majority of what we called chaotic systems in this note, it is possible to prove (\cite{HHVV}, Th. 1) that
\begin{equation}\label{TTHH}
\mu\left(\sum_{l=1}^{\lfloor{\frac{t}{\mu(B_n(\Gamma))}}\rfloor}{\bf1}_{B_n(\Gamma)}(T^l(x))=k\right)\rightarrow \nu(\{k\}),
\end{equation}
as $n\rightarrow \infty$, where $\nu$ is the compound Poisson distribution for the parameter $t\pi_l$. This distribution can be recovered by its generating function $g_{\nu}(z)=\exp{\int_0^{\infty}(z^x-1)dH(x)}$, where $H$ is the counting measure on $\mathbb{N}$ defined as $H=\sum_lt\pi_l\delta_l$, with $\delta_l$ the point mass at $l$. The Poly\`a-Aeppli distribution is a particular case of compound Poisson distribution with the $\pi_l$ having a geometric distribution $\pi_l=\theta (1-\theta)^l$. The interesting feature of the theory in \cite{HHVV} is that the $\pi_l$ can be expressed in terms of more accessible quantities, in particular of the {\em distribution function $\hat{\alpha}_l$ of the $(l-1)$ return  into the set $B_n(\Gamma)$ for $n$ tending to infinity}, namely $\pi_l= \frac{\hat{\alpha}_l-2\hat{\alpha}_{l+1}+\hat{\alpha}_{l+2}}{\hat{\alpha}_1-\hat{\alpha}_2},$ where
\begin{equation}\label{alfa}
\hat\alpha_\ell=\lim_{K\to\infty}\lim_{n\to\infty}\mu_{B_n(\Gamma)}(\tau_{B_n(\Gamma)}^{\ell-1}\le K),
\end{equation}
being $\tau_{B_n(\Gamma)}^\ell(x)=\tau_{B_n(\Gamma)}^{\ell-1}(x)+\tau_{B_n(\Gamma)}(T^{\tau_U^{\ell-1}}(x)),$
 with $\tau_{B_n(\Gamma)}^1=\tau_{B_n(\Gamma)}=\min\{j\ge1: T^j(x)\in B_n(\Gamma), x\in B_n(\Gamma)\},$ the first return time.
 Moreover if $\theta$ denotes the extremal index (as defined in section \ref{SDEI} for general target sets), we have the identity:
\begin{equation}\label{REI}
\sum_{k=1}^{\infty} k \pi_k= \frac{1}{\theta}.
\end{equation}
This result is a generalization of Eq. (\ref{TH2}) stated in section 2 and which targeted neighborhoods of periodic points.

We now apply Eq. (\ref{REI}) to the dynamical extremal index introduced in section \ref{SDEI} in the case of direct product of two maps. For the class of maps verifying  Proposition \ref{EEII} we have the general result for the $\hat{\alpha}_l$ \cite{HHVV}:

$$\hat{\alpha}_{l+1}
 =\frac1{ \int_Ih^2(x) \,dx}\int_I \frac{h^{2}(x)}{|DT^l(x)|}\,dx.$$

This formula was also previously obtained by \cite{CCCC}.
In order to make rigorous computations, we consider a Markov map of the interval $T$ for which the density $h$ is piecewise constant (see for instance \cite{GB}):
\begin{align*}
T(x)&=3x\,\,,\quad x\in I_1:=[0,1/3)\\
&=5/3-2x\,\,,\quad x\in I_2:=[1/3,2/3)\\
&=-2+3x\,\,,\quad x \in I_3:=[2/3,1).
\end{align*}
The density reads

\begin{align*}
h(x)&=3/5=:h_1\,\,,\quad x\in I_1\\
&=6/5=:h_2\,\,,\quad x\in I_2\\
&=6/5=:h_3\,\,,\quad x \in I_3.
\end{align*}

\begin{figure}
\begin{tikzpicture}[xscale=6,yscale=5]

     \draw node [below] {$0$} (0,0) -- (1,0);
      \draw (0,0) -- (0,1)  node[left] {$1$};
      \draw[domain=0:1/3,thick,variable=\x,red] plot ({\x},{3*\x});
        \draw[domain=1/3:2/3,thick,variable=\x,red] plot ({\x},{5/3-2*\x});
          \draw[domain=2/3:1,thick,variable=\x,red] plot ({\x},{-2+3*\x});
          \draw [dashed] (1/3,0) node [below] {$1/3$} -- (1/3,1);
       \draw [dashed] (2/3,0) node [below] {$2/3$} -- (2/3,1/3);
        \draw (1,0) node [below] {$1$} -- (1,1);
        \draw  (0,1)  -- (1,1);
         \draw [dashed] (0,1/3) node [left] {$1/3$} -- (2/3,1/3);
    \end{tikzpicture}
\hspace{1cm}
\begin{tikzpicture}[xscale=6,yscale=3.2]

     \draw  node [below] {$0$} (0,0) -- (1,0);
      \draw (0,0) -- (0,1.5);
      \draw[domain=0:1/3,ultra thick,variable=\x,red] plot ({\x},{3/5});
        \draw[domain=1/3:2/3,ultra thick,variable=\x,red] plot ({\x},{6/5});
          \draw[domain=2/3:1,ultra thick,variable=\x,red] plot ({\x},{6/5});
           \draw [dashed] (1/3,0) node [below] {$1/3$} -- (1/3,6/5);
       \draw [dashed] (2/3,0) node [below] {$2/3$} -- (2/3,6/5);
        \draw  (1,0) node [below] {$1$} -- (1,1.5);

        \draw [dashed] (0,3/5) node [left] {$3/5$};
        \draw (0,1.5) -- (1,1.5);
        \draw [dashed] (0,6/5) node [left] {$6/5$} -- (1/3,6/5);
    \end{tikzpicture}
\caption{Graphical representation of the Markovian map (left) and its associated invariant density (right).}
\end{figure}
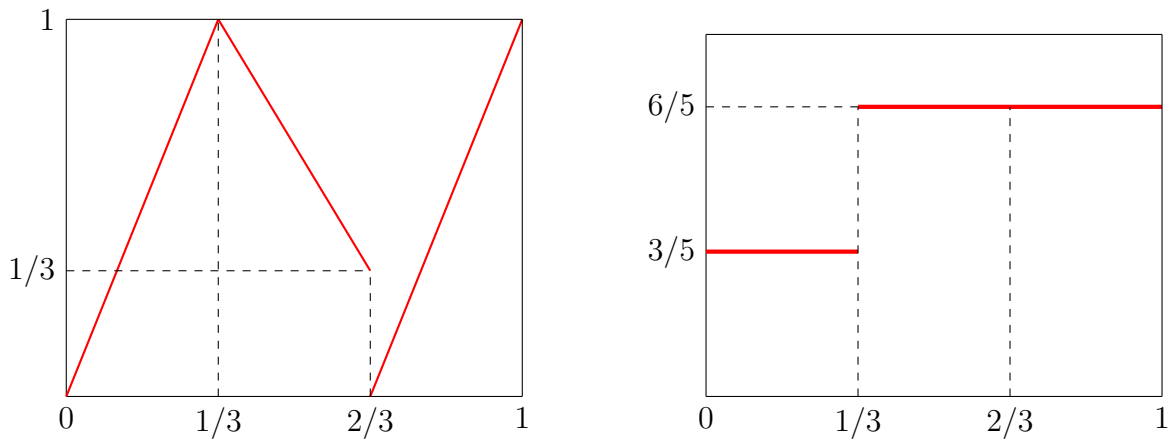

The extremal index is given by the integral in Eq. (\ref{EEII}) which has already been computed in \cite{CFMVY} and gives $\theta \approx 0.5926$. The quantities $\hat{\alpha}_l$ can also be computed explicitly.  We defer to \cite{GGHHVV} for the details, and obtain:
$$\hat\alpha_{k+1}=\frac{ \left[(3 \sqrt{145} - 37) (17 - \sqrt{145})^k + (37 + 3 \sqrt{145}) (17 + \sqrt{145})^k\right]}{ 2\times72^k\sqrt{145}}.$$
 The interesting point is that the $\pi_k$ do not follow a geometric distribution. For the statistics of the number of visits, we get a compound Poisson distribution which is not Poly\`a-Aeppli. To illustrate that, we show the distributions $\mu(N_n(t))=k$ for $t=50$ and different values of $k$ for the real process associated to the markov map $T$, and the distribution obtained by supposing a Poly\`a-Aeppli with parameter equal to $0.5926$. In this context, the sets $B_n(\Gamma)$ are  the tubular neighborhoods of the diagonal $\Delta_n^2$ defined in Eq. (\ref{deltak}). For our numerical simulation, we fix a small set $\Delta_n^2$ by taking $u_n$ in Eq. (\ref{deltak}) equal to the $0.99-$quantile of the distribution of the observable computed along a pre-runned trajectory of $10^4$ points. We obtain that for a small enough neighborhood of the diagonal, $\mu_2(\Delta_n^2) \approx (h_1^2+h_2^2+h_3^2)\sqrt{2}\exp{(-u_n)}$.

\begin{figure}[h!]
        \centering
        \includegraphics[height=2.in]{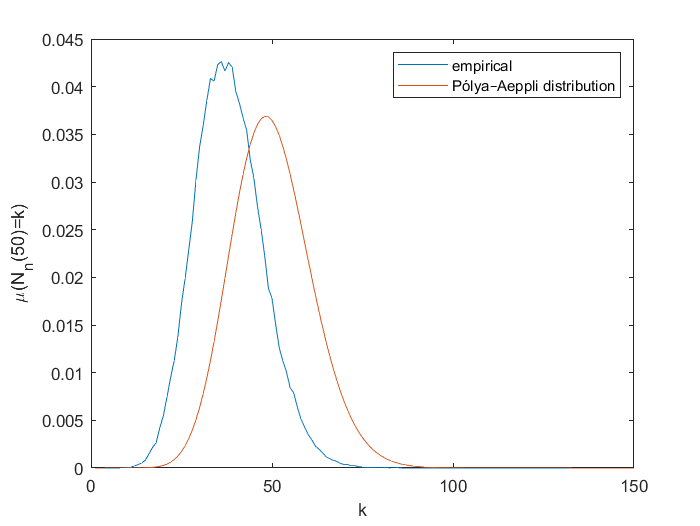}
        \caption{Comparison between the empirical distribution $\mu(N_n(50)=k)$ and the Poly\`a-Aeppli distribution of parameters $t=50$ and $\theta=0.5926$. The empirical distribution is obtained by considering $10^5$ runs for which is recorded the number of entrances in the set $\Delta_n^2$ ,as defined in the text, after $t/\mu_2(\Delta_n^2)$ iterations of the map.}
        \label{deiber}
    \end{figure}

\subsection{The random case}
There are only few results on the statistics of the number of visits in the case of random systems. In the forthcoming paper \cite{FFMV}, we show that the quenched Lasota-Yorke example studied in section \ref{CIR} gives a Poisson distribution for the visits around almost all target point $z$. A Poly\`a-Aeppli distribution is seen to hold for some particular random subshits.

We are not aware of a rigorous treatment  of these point processes for stationary (annealed) random systems. We believe that the theory developed in \cite{HHVV} could be applied as well to those systems. In particular we expect to get the extremal index as the inverse of the average cluster size. To corroborate this claim, we computed the statistics of the number of visits for two examples: the case in section \ref{DN} for the composition of two maps with $b=1/2$, for which there are infinitely many non zero $q_k$, and the moving target example in section \ref{MMTT}. In both cases we found that the real distribution coincide with Poly\`a-Aeppli with parameter given by $t=50$ and the extremal index of respectively is $2/3$ and $1-\frac32 (\frac14)^2-6(\frac14)^3(\frac12)^2 \approx 0.8828$. These results are stable against different values of $t$.

\begin{figure}[h!]
    \centering
    \begin{subfigure}[t]{0.5\textwidth}
        \centering
        \includegraphics[height=2.in]{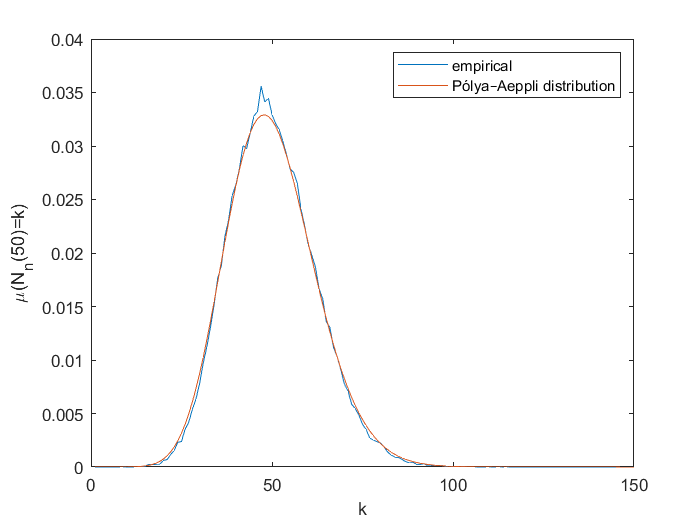}
        \caption{}
    \end{subfigure}%
    ~
    \begin{subfigure}[t]{0.5\textwidth}
        \centering
     \includegraphics[height=2in]{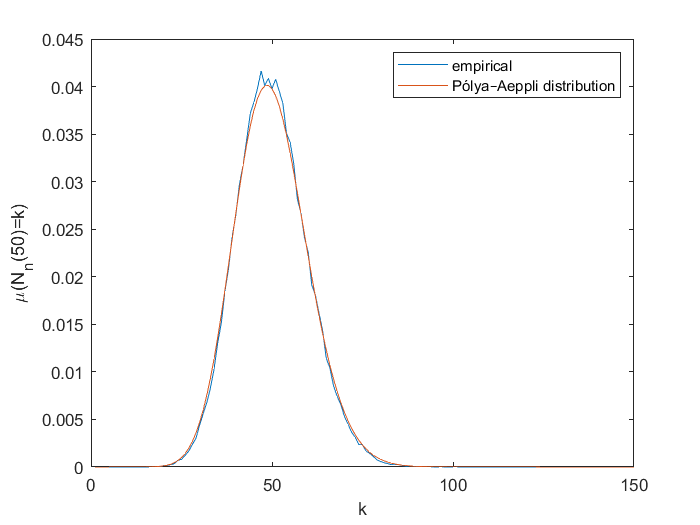}
         \caption{}
    \end{subfigure}
   \caption{For the composition of two maps with $b=1/2$ (left) and the moving target example (right), comparison between the empirical distribution $\mu(N_n(50)=k)$ and the Poly\`a-Aeppli distribution of parameters $t=50$ and respectively $\theta=2/3$ and $\theta=0.8828$. The empirical distribution is obtained as for the Markov maps, by considering $10^5$ runs. For both examples, we considered target balls of radius $r=\exp{(-u)}$, where $u$ is the 0.99-quantile of the empirical distribution of the observable computed along a pre-runned trajectory of size $10^4$.}
   \label{thetaq}
\end{figure}

\subsection{The rotational case}\label{roto}
We now compute the statistics of the number of visits for the process described in section \ref{ROT}. The statistics are computed in the same way as for the preceding examples and the theoretical value of 1 for the extremal index suggests that the number of visits is purely Poissonian. This is indeed what is observed in figure \ref{poisson}. Again the results are stable against different values of $t$.

\begin{figure}[h!]
        \centering
        \includegraphics[height=2.in]{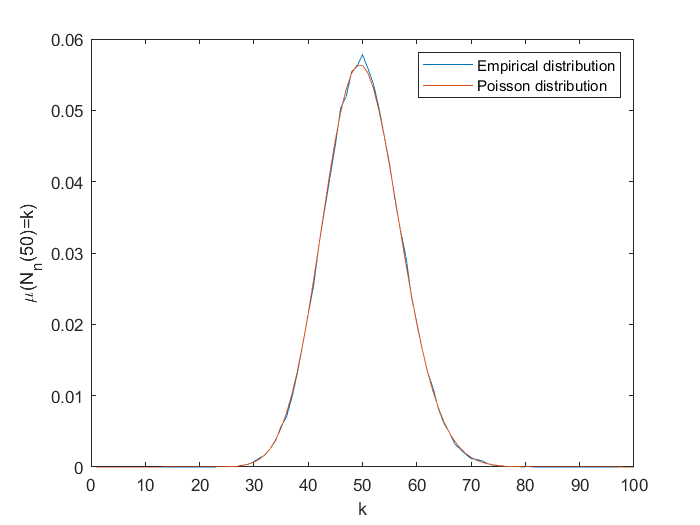}
        \caption{Comparison between the empirical distribution $\mu(N_n(50)=k)$ and the Poisson distribution of parameter $t=50$, computed with a trajectory of length $5.10^7$ of the rotational system described in section \ref{ROT}. We used the same parameters and procedure as the one in figure \ref{thetaq}.}
        \label{poisson}
    \end{figure}

\subsection{Climate data}

We study now the statistics of the number of visit for the atmospheric data presented in section \ref{est}. The target set considered in the analysis is a ball  centered at a point $z$ that corresponds to the mean sea-level pressure recorded on Jan 5th 1948. The radius $r=e^{-\tau}$, is fixed using as $\tau$  the $0.95-$quantile of the observable distribution. We considered the number of visits in an interval of length 365 days, that corresponds to the annual cycle\footnote{This time scale is relevant for the underlying dynamics of the atmospheric circulation.} so that the parameter $t$ in Eq. (\ref{PP1}) is taken equal to
$t=\lfloor365\mu(B(z,r))\rfloor$. For the target set considered, we get $t=18$, that is close to the mean value that we find for the number of visits of $18,13$. In figure \ref{visitclimate}, we compare the obtained empirical distribution with the Poly\`a-Aeppli distributions of parameters $t$ and the $EI$ obtained using different estimates: S\"uveges, $\hat\theta_5$ and $\hat\theta_{10}$. The distribution using the S\"uveges estimate seems to fit the empirical data better than the other two. In fact if we increase the order of the estimate $\hat\theta_m$, the value it attributes to the EI decreases and the associated Poly\`a-Aeppli distribution flattens. We believe that this is due to finite effects; indeed we observe that if the fixed threshold $u$ in formula \ref{MC} is too small, the numerator approaches 0 as $m$ increases, yielding bad estimates for the EI. Qualitatively similar results are found for different atmospheric states and different quantiles considered (although taking higher quantiles gives too few data for the statistics), suggesting that a Poly\`a-Aeppli distribution for the number of visits holds for this type of high dimensional observational data. This analysis, applied systematically to a state of a complex system, can provide important information on the recurrence properties of the target state, namely how many visit to expect in a given time interval. In this sense, the extremal index only provides an averaged information. The drawback is in the number of data required to provide a reliable estimates of the visits distribution (see Figure \ref{visitclimate}). This problem could be however overcome by averaging the distribution of visits for class of events (several states $z$) instead of considering just one state.

\begin{figure}[h!]
        \centering
        \includegraphics[height=2.in]{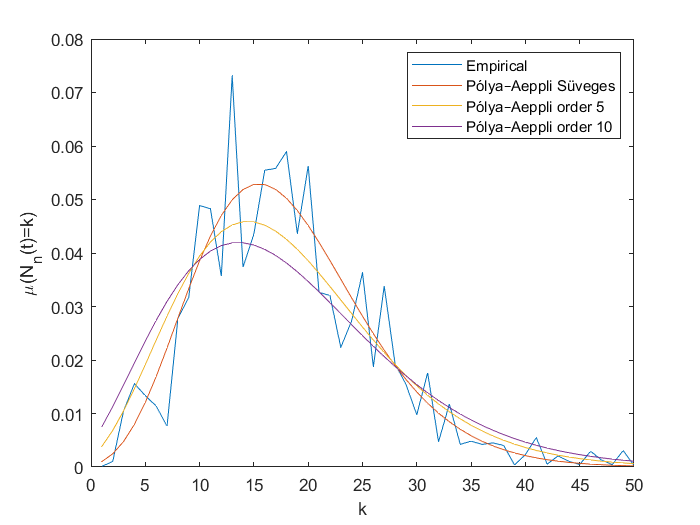}
        \caption{Comparison between the empirical distribution $\mu(N_n(t)=k)$ and the Poly\`a-Aeppli distributions of parameters $t=18$ and the EI found with different estimates. The empirical distribution is obtained by considering 24396 intervals of length 365 and counting the number of visits inside the target ball described in the text.}
        \label{visitclimate}
    \end{figure}

\section{Conclusions}

In this paper we have reviewed the meaning of the extremal index in dynamical systems and provided numerical and theoretical  strategies for its estimation. We have detailed how, depending on the nature (stochastic or deterministic) of the underlying system the estimators may provide different insights on the dynamics of the system.
Our main findings can be summarized as follows:
\begin{itemize}
\item As soon as a deterministic system is perturbed with noise having a smooth distribution, the EI becomes equal to $1$.  With discrete (point masses) distributions, the EI could be $1$ for almost choices of the target and of the realisations, (see sections \ref{CIR}, \ref{ROT}, \ref{AD}), but it could also  be  less than $1$, (see sections  \ref{DN}, \ref{DND}).
    \item What we described in the previous item happens either with annealed and quenched  stochastic perturbations. In the annealed case with discrete probability distributions, the value of the EI could depend either on  the choice of the masses and on the closeness of the maps (see Remark \ref{IR}). This suggests that in the situations where the EI is generically $1$, if one  finds a lesser value, this could be the effect of a discrete perturbation.
        \item The situation where the dynamics is deterministic, but the target set is known with some probability with a continuous distribution function is interesting, as discussed in section \ref{CCNN}. This describes physical situations where the localisation and observation of the extreme event are perturbed. No matter of the intensity of those perturbations, the extremal index is everywhere equal to one.
        \item The statistics of the number of visits ({\em persistence}) in the shrinking target set gives rise to point processes of Poisson type. We have shown that the geometric nature of the target set produces different compound Poisson distributions. Moreover these compound Poisson distributions persist even when the system is randomly perturbed. Indeed the fact that the extremal index is the inverse of the average cluster size seems to be a robust property of chaotic systems in the deterministic and random settings.
    \item In presence of periodicity one expects asymptotic distributions of  Poly\`a-Aeppli type. Nevertheless for  target sets  composed by more general invariant sets, different compound Poisson distribution could emerge  up as we showed  above. This suggest that for the statistics of the number of visits around sets with a more complicated geometry, and this could be relevant in applications, the computation of the EI is probably not enough, and one should go to higher moments to get the real asymptotic distribution.
\end{itemize}

\section{appendix}

As mentioned in section \ref{nonstat}, results proving the existence of extreme value laws for non-stationary sequential systems are lacking. Nevertheless, the methods of estimation described in the paper can still be used to evaluate a quantity $\theta$ that would correspond to the extremal index in the eventuality that an extreme value law holds for this kind of systems, which by the way was the case in \cite{FFV}. We could also alternatively define the extremal index  by computing the  statistics of the number of visit and evaluating the expectation in Eq. (\ref{REI}). We choose this second option and  consider the motion given by the concatenation

$$f_{\bar\xi}^n(x)=f_{\xi_n} \circ \cdots \circ f_{\xi_1} (x),$$
where the probability law of the $\xi_i$ changes over time. In particular, we consider the 10 maps
$$f_i(x)=2x+b_i \mod\ 1,$$
where $b_i$ is the $i^{th}$ component of a vector $\bar b$ of size 10, with entries equally spaced between $0$ and $1/2$. We consider sequences of time intervals $[\tau k+1,(k+1)\tau]$, with $\tau=10$, for $k=0,1,2,\dots$ in which the weights associated to $\xi_i$ are equal to $p_i^k$. For every $\tau$ iterations, the weight associated to each $\xi_i$ changes randomly, with the only constraint that they sum to 1. $\hat\theta_5$ is computed considering a trajectory of $5.10^7$ points, and a threshold $u$ corresponding to the $0.995-$quantile of the observable distribution. Using the same trajectory, we computed the empirical distribution of the number of visits in a ball centered at the origin and of radius $r=e^{-u}$ in intervals of time of length $\lfloor 2tr \rfloor$, with $t=50$. This is indeed $t$ times the Lebesgue measure (which is the invariant measure associated to all the maps $f_i$) of a ball of radius $r$ centered in the origin. Choosing the same trajectory for the computation of $\theta$ and for the statistics of visits is of crucial importance here, because different probability laws imply large variations of $\theta$ depending on the trajectory considered. In figure \ref{sequential}, we observe again a perfect agreement between the empirical distribution of the number of visits and the Poly\`a-Aeppli distribution of parameters $t=50$ and $\hat\theta_5$, which is equal to 0.92 for the trajectory presented in the figure \ref{sequential}. Although different trajectories give variations for $\theta$, this agreement is stable against different trajectories and different values of $t$ and $\tau$, suggesting that a geometric Poisson distribution for the number of visits given by $\theta$ is a universal feature, even for non-stationary scenarii.

\begin{figure}[h!]
        \centering
        \includegraphics[height=2.in]{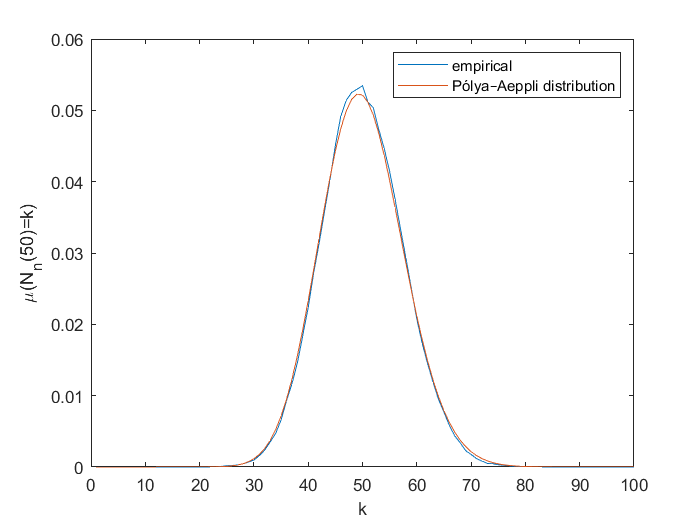}
        \caption{Comparison between the empirical distribution $\mu(N_n(50)=k)$ and the Poly\`a-Aeppli distribution of parameters $t=50$ and  $\hat\theta_5\approx 0.92$, computed with a trajectory of length $5.10^7$ of the sequential system described in the text. The procedure used to compute the empirical distribution is described in the text.}
        \label{sequential}
    \end{figure}

\section{Acknowledgement}
We would like to thank Jorge Freitas, Michele Gianfelice, Nicolai Haydn and Giorgio Mantica for several  discussions related to different parts of this work. PY was supported by ERC grant No. 338965-A2C2.



\end{document}